\newcommand\citestacks[1]{\cite[\href{https://stacks.math.columbia.edu/tag/#1}{tag #1}]{Stacks}}
\newcommand{\A}{\mathbb{A}}
\newcommand{\C}{\mathbb{C}}
\newcommand{\E}{\mathcal{E}}
\newcommand{\II}{\mathcal{I}}
\newcommand{\OO}{\mathcal{O}}
\renewcommand{\P}{\mathbb{P}}
\newcommand{\Q}{\mathbb{Q}}
\newcommand{\R}{\mathbb{R}}
\newcommand{\Z}{\mathbb{Z}}
\newcommand{\fa}{\mathfrak{a}}
\newcommand{\fb}{\mathfrak{b}}
\newcommand{\fc}{\mathfrak{c}}
\newcommand{\fd}{\mathfrak{d}}
\newcommand{\fm}{\mathfrak{m}}
\newcommand{\Gm}{\mathbb{G}_{\mathrm{m}}}
\newcommand{\tors}{\mathrm{tor}}
\newcommand{\doublebackslash}{\mathbin{\backslash\!\backslash}}
\newcommand{\isom}{\stackrel{\sim}{\longrightarrow}}
\DeclareMathOperator{\GL}{GL}
\DeclareMathOperator{\Gal}{Gal}
\DeclareMathOperator{\Norm}{N}  
\DeclareMathOperator{\SL}{SL}
\DeclareMathOperator{\id}{id}
\newcommand\degred{\deg_{\mathrm{red}}}
\newtheorem{tm}{Theorem}[section]
\newtheorem{lemma}[tm]{Lemma}
\newtheorem{corollary}[tm]{Corollary}
\theoremstyle{definition}
\newtheorem{definition}[tm]{Definition}
\newtheorem{remark}[tm]{Remark}
\newtheorem{example}[tm]{Example}
\title[Counting elliptic curves with level structures]{Counting elliptic curves with prescribed level structures over number fields}
\author{Peter Bruin}
\address{Mathematisch Instituut, Universiteit Leiden, Postbus 9512, 2300 RA Leiden, Netherlands}
\email{P.J.Bruin@math.leidenuniv.nl}
\urladdr{http://pub.math.leidenuniv.nl/~bruinpj/}
\author{Filip Najman}
\address{University of Zagreb, Faculty of Science, Department of Mathematics,  Bijeni\v{c}ka cesta 30, 10000 Zagreb, Croatia}
\email{fnajman@math.hr}
\urladdr{http://web.math.pmf.unizg.hr/~fnajman/}
\thanks{This work was supported by the QuantiXLie Centre of Excellence, a project
co-financed by the Croatian Government and European Union through the
European Regional Development Fund - the Competitiveness and Cohesion
Operational Programme (Grant KK.01.1.1.01.0004) and by the Croatian Science Foundation under the project no.\ IP-2018-01-1313.
The first-named author was partially supported by the Dutch Research Council (NWO/OCW), as part of the Quantum Software Consortium programme (project number 024.003.037).
This project began during a joint stay of the authors at the Max-Planck-Institut f\"ur Mathematik, Bonn, in October 2018.  We are grateful to MPIM for its funding and hospitality.}
\subjclass{11G05, 11G18, 11G50, 14D23, 14G40}
\begin{document}

\maketitle

\begin{abstract}
Harron and Snowden \cite{HS} counted the number of elliptic curves over~$\Q$ up to height~$X$ with torsion group $G$ for each possible torsion group~$G$ over~$\Q$. In this paper we generalize their result to all number fields and all level structures~$G$ such that the corresponding modular curve $X_G$ is a weighted projective line $\P(w_0,w_1)$ and the morphism $X_G\to X(1)$ satisfies a certain condition.  In particular, this includes all modular curves $X_1(m,n)$ with coarse moduli space of genus~$0$.
We prove our results by defining a \emph{size function} on $\P(w_0,w_1)$ following unpublished work of Deng~\cite{Deng}, and working out how to count the number of points on $\P(w_0,w_1)$ up to size~$X$.
\end{abstract}

\section{Introduction}
Let $E$ be an elliptic curve over a number field~$K$. The Mordell--Weil theorem says that $E(K)$ is isomorphic to $\Z^r \times E(K)_\tors$ for some $r\ge0$, where $E(K)_\tors$ is the (finite) torsion subgroup of $E(K)$. It is a natural question which groups appear as $E(K)_\tors$, and moreover how often each one of these groups appears.
Harron and Snowden \cite{HS} studied this question and answered it in the case $K=\Q$. The aim of this paper is to study the same problem, but to both allow $K$ to be any number field and to answer the more general question how often a prescribed $G$-level structure appears.

To make this question more precise, let $n$ be a positive integer, let $G$ be a subgroup of $\GL_2(\Z/n\Z)$, and let $K$ be a number field.  We say that an elliptic curve $E$ over $K$ \emph{admits a $G$-level structure} if there exists a $(\Z/n\Z)$-basis of $E[n](\bar K)$ such that the Galois representation
$\rho_{E,n}\colon \Gal(\bar K/K)\to \GL_2(\Z/n\Z)$
defined by this basis has image contained in $G$.  We write
$$
\E_{G,K} = \{\text{elliptic curves over~$K$ admitting a $G$-level structure}\}/{\cong}.
$$

We will define a \emph{size function} $S_K$ from the set of isomorphism classes of elliptic curves over~$K$ to $\R_{>0}$; see Definition~\ref{def:size-ell}.  We define a function $N_{G,K}\colon\R_{>0}\to\Z_{\ge0}$ by
$$
N_{G,K}(X) = \#\{E\in \E_{G,K}\mid S_K(E)^{12}\le X\}.
$$

Let $X_G$ be the moduli stack of generalized elliptic curves with $G$-level structure.  This is a one-dimensional proper smooth geometrically connected algebraic stack over the fixed field $K_G$ of the action of~$G$ on $\Q(\zeta_n)$ given by $(g,\zeta_n)\mapsto\zeta_n^{\det g}$.
We consider cases where $X_G$ is a weighted projective line $\P(w_0,w_1)$ over~$K_G$.
We can now state our main result (which is also stated in a slightly different form in \Cref{thm:final}).

\begin{tm}
\label{thm:intro}
Let $n$ be a positive integer, and let $G$ be a subgroup of\/ $\GL_2(\Z/n\Z)$.
Assume that the stack~$X_G$ over~$K_G$ is isomorphic to $\P(w)_{K_G}$, where $w=(w_0,w_1)$ is a pair of positive integers, and let $e(G)$ be the reduced degree of the canonical morphism $X_G\to X(1)$ (see Definition~\ref{def:reduced-degree}).
Furthermore, assume $e(G)=1$ or $w=(1,1)$ holds.
Then for every finite extension $K$ of~$K_G$, we have
$$
N_{G,K}(X) \asymp X^{1/d(G)}
\quad\text{as }X\to\infty,
$$
where
$$
d(G) = \frac{12e(G)}{w_0+w_1}.
$$
\end{tm}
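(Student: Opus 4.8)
The plan is to translate the problem of counting elliptic curves into a point-counting problem on the weighted projective line $\P(w_0,w_1)$ over~$K_G$, then to count those points according to the size function.

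First I would make the connection between $\E_{G,K}$ and the rational points of $X_G$. Since $X_G$ is the moduli stack of generalized elliptic curves with $G$-level structure, an elliptic curve $E$ over~$K$ admitting a $G$-level structure corresponds (up to finitely many choices coming from automorphisms and the level structure itself) to a $K$-point of~$X_G\cong\P(w)_{K_G}$. The composite $X_G\to X(1)$ sends such a point to the $j$-invariant of~$E$, and I expect the size function $S_K(E)$ to be expressible, up to bounded factors, in terms of a naturally defined \emph{size} on $\P(w_0,w_1)$ that is compatible with this morphism. The first real task is therefore to set up the correct dictionary: define the size of a point on $\P(w_0,w_1)$ and show that $S_K(E)^{12}$ matches the size of the corresponding point, up to constants that do not affect the $\asymp$ statement.

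Next I would count points on $\P(w_0,w_1)$ of bounded size. A point of $\P(w_0,w_1)(K)$ can be represented by a pair $(x_0,x_1)$ with weights $(w_0,w_1)$, modulo the scaling $(x_0,x_1)\mapsto(\lambda^{w_0}x_0,\lambda^{w_1}x_1)$. Choosing suitable integral (or $S$-integral) representatives and removing common factors, the size of the point should be comparable to $\max(|x_0|^{1/w_0},|x_1|^{1/w_1})$ suitably normalized over all archimedean places of~$K$; this is the height associated with the grading. The counting of integral points in such a region, with the weighting by $w_0$ and $w_1$, is a lattice-point count in a region whose volume grows like $X^{(w_0+w_1)/(12e(G))}$ when we impose $S_K(E)^{12}\le X$ and account for the degree~$e(G)$ of the map to $X(1)$; this yields the exponent $1/d(G)$ with $d(G)=12e(G)/(w_0+w_1)$.

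The main obstacle will be the interaction between the stacky structure and the counting, encoded precisely in the hypothesis that $e(G)=1$ or $w=(1,1)$. The reduced degree $e(G)$ governs how many times each $j$-invariant is hit, and hence how the size on $\P(w)$ compares to the pullback of the size on~$X(1)$; when $e(G)>1$ and the weights are nontrivial, the relation between the two sizes is no longer simply multiplicative in the way the naive count requires, which is exactly why that case is excluded. The delicate estimates will be showing that the representatives can be chosen so that the common-factor removal and the scaling ambiguity contribute only bounded multiplicative constants (so the upper and lower bounds in $\asymp$ both hold), and controlling the error terms in the lattice-point count uniformly so that they are absorbed into the implied constants. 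I expect the upper bound to be routine once the dictionary is in place, with the lower bound requiring the more careful argument that enough genuinely distinct elliptic curves (not merely points) arise.
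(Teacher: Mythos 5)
Your outline follows the paper's strategy exactly: identify $\E_{G,K}$ (up to cusps and bounded multiplicities) with $\P(w)(K)$, compare the size of a point with the size of its image under the morphism $\phi\colon\P(w)\to\P(4,6)$ induced by $X_G\to X(1)$, and then count points of bounded size on $\P(w)(K)$ by a geometry-of-numbers argument (this last step is Deng's theorem, which gives $\#\{z\in\P(w)(K)\mid S_w(z)\le T\}\sim C\,T^{w_0+w_1}$). The exponent bookkeeping leading to $X^{(w_0+w_1)/(12e(G))}$ is also right.

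The genuine gap is the step you describe as ``the first real task'' and then leave as an expectation: the two-sided comparison $S_{(4,6)}(\phi(z))\asymp S_w(z)^{e}$. The upper bound $S_{(4,6)}(\phi(z))\ll S_w(z)^{e}$ is indeed routine (substitute $z$ into the homogeneous polynomials $f_0,f_1$ of degrees $4e,6e$ defining $\phi$ and track scaling ideals and Archimedean factors). The lower bound is not: a priori both $f_0(z)$ and $f_1(z)$ could be much smaller than $S_w(z)^{e}$ predicts, which would make the upper bound on $N_{G,K}(X)$ fail. The paper's proof of this direction uses that $K[x_0,x_1]$ is finite over $K[f_0,f_1]$ (a graded Nakayama argument, available because $\sqrt{(f_0,f_1)}\supseteq(x_0,x_1)$), so each $x_i$ satisfies a monic equation whose coefficients are polynomials in $f_0,f_1$; evaluating at $z$ and using integrality of roots of monic polynomials yields only $S_{(4,6)}(\phi(z))\gg S_{(\nu_0,\nu_1)}(z_0^{\delta_0},z_1^{\delta_1})$, where $w_i/e=\nu_i/\delta_i$ in lowest terms. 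The hypothesis ``$e(G)=1$ or $w=(1,1)$'' enters precisely because only then does $S_{(\nu_0,\nu_1)}(z_0^{\delta_0},z_1^{\delta_1})$ coincide with $S_w(z)^{e}$; your explanation of the hypothesis in terms of ``how many times each $j$-invariant is hit'' is not the actual mechanism, and the paper exhibits a representable morphism $\P(1,3)\to\P(1,3)$, $(x_0,x_1)\mapsto(x_0^2,x_1^2)$, with $e=2$ for which the two-sided comparison genuinely fails. Without this integral-dependence argument (or a substitute for it), your plan establishes only the lower bound in $N_{G,K}(X)\asymp X^{1/d(G)}$.
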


As all modular curves $X_G=X_1(m,n)$ with coarse moduli space of genus~$0$ satisfy the assumptions of \Cref{thm:intro}, our result generalizes \cite[Theorem~1.2]{HS}, where this statement was proved in the case where $K=\Q$ and where $G$ is one of the 15 groups corresponding to the torsion groups from Mazur's theorem.

A recent result of Pizzo, Pomerance and Voight \cite{PPV} is $N_{G,\Q}(X)\sim X^{1/2}$ for $G$ such that $X_G=X_0(3)$.  Moreover, they determined the constant in front of the leading term of the function $N_{G,\Q}(X)$ as well as the first two lower-order terms.  This result falls outside of the reach of our results, as $X_0(3)$ is not a weighted projective line (cf.\ Remark~\ref{remark:counterexample}).

Similarly, Pomerance and Schaefer \cite{PS} proved that $N_{G,\Q}(X)\sim X^{1/3}$ for $G$ such that $X_G=X_0(4)$, and determined the constants in front of the leading term and the first lower-order term.  Our result implies $N_{G,K}\asymp X^{1/3}$ for all number fields $K$; for $K=\Q$, this follows from the sharper results of~\cite{PS}.

Cullinan, Kenney and Voight \cite[Theorem~1.3.3]{CKV} proved a sharper version of \Cref{thm:intro} in the special case where $X_G$ is a projective line (i.e.\ isomorphic to $\P^1=\P(1,1)$) and $K=\Q$.  More precisely, they give an asymptotic expression for $N_{G,\Q}(X)$ containing an effectively computable leading coefficient and an error term.

Boggess and Sankar \cite{Boggess-Sankar} determined the growth rate of the number of elliptic curves over~$\Q$ with a cyclic $n$-isogeny for $n\in\{2,3,4,5,6,8,9,12,16,18\}$.  Out of these, only the cases $n=2$ and $n=4$ (for which a more precise result was already proved in \cite{HS,PS}) correspond to weighted projective lines and are therefore generalized to number fields by \Cref{thm:intro}.

\begin{remark}
The 12-th power in the definition of $N_{G,K}(X)$ is included for easier comparison with the height function in \cite{HS}; see Remark~\ref{remark:size-comparison}.
\end{remark}

\begin{remark}
Our result gives a more conceptual interpretation of $d(G)$; cf.\ \cite[\S1.2]{HS}.  Namely, we show that $d(G)$ can be expressed in terms of the pair of positive integers $(w_0,w_1)$ for which $X_G$ is isomorphic to the weighted projective line with weights $(w_0,w_1)$, and~$e(G)$, an invariant (similar to the degree) of the morphism $X_G\to X(1)$.

We also remark that our result shows how in certain cases one can count points in the image of a morphism of stacks, partially answering a question in \cite[Remark~1.5]{HS}.
\end{remark}

\section{Weighted projective spaces}

\begin{definition}
Given an $(n+1)$-tuple $w=(w_0,\ldots,w_n)$ of positive integers, the \emph{weighted projective space with weights $w$} is the algebraic stack
$$
\P(w) = [\Gm\backslash\A^{n+1}_{\ne0}]
$$
over $\Z$, where $\A^{n+1}_{\ne0}$ is the complement of the zero section in~$\A^{n+1}$ and $\Gm$ acts on $\A^{n+1}_{\ne0}$ by
$$
(\lambda,(x_0,\ldots,x_n))\longmapsto(\lambda^{w_0}x_0,\ldots,\lambda^{w_n}x_n).
$$
\end{definition}

It is known that $\P(w)$ is a proper smooth algebraic stack over~$\Z$, and in fact a complete smooth toric Deligne--Mumford stack in the sense of Fantechi, Mann and Nironi \cite[Example 7.27]{FMN}.
For every ring~$R$, there is a \emph{groupoid of $R$-points} of~$\P(w)$.  We will mostly be interested in the set of isomorphism classes of this groupoid, which we call the \emph{set of $R$-points} of $\P(w)$ and denote by $\P(w)(R)$.  Given a field~$L$, the set $\P(w)(L)$ can be described as
$$
\P(w)(L)=L^\times\backslash(L^{n+1}\setminus\{0\}),
$$
where $L^\times$ acts on $L^{n+1}\setminus\{0\}$ by
$$
(\lambda,(x_0,\ldots,x_n))\longmapsto(\lambda^{w_0}x_0,\ldots,\lambda^{w_n}x_n).
$$
The image in $\P(w)(L)$ of an element $x\in L^{n+1}\setminus\{0\}$ will be denoted by $[x]$.

\begin{example}
If $w=(m)$ with $m$ a positive integer, then $\P(m)$ is canonically isomorphic to the classifying stack of the group scheme~$\mu_m$.  If $L$ is a field, then we have
$$
\P(m)(L) = (L^\times)^m\backslash L^\times.
$$
\end{example}

\section{Size functions}

Let $w$ be an $(n+1)$-tuple as above, let $K$ be a number field, and let $\OO_K$ be its ring of integers.  On the set $\P(w)(K)$, we define a \emph{size function} similarly to Deng \cite{Deng}; see Definition~\ref{def:size} below.  We do not restrict to weighted projective spaces that are ``well-formed'' in the sense of~\cite{Deng}.

\begin{definition}
For $x\in K^{n+1}$, the \emph{scaling ideal} of~$x$, denoted by\/ $\II_w(x)$, is the intersection of all fractional ideals $\fa$ of~$\OO_K$ satisfying $x\in\fa^{w_0}\times\cdots\times\fa^{w_n}$.  Similarly, for an $(n+1)$-tuple $(\fb_0,\ldots,\fb_n)$ of fractional ideals of~$\OO_K$, the \emph{scaling ideal} of $(\fb_0,\ldots,\fb_n)$, denoted by\/ $\II_w(\fb_0,\ldots,\fb_n)$, is the intersection of all fractional ideals $\fa$ of~$\OO_K$ satisfying $\fb_i\subseteq\fa^{w_i}$ for all $i$.
\end{definition}

\begin{remark}
For all $x\in K^{n+1}\setminus\{0\}$, the fractional ideal $\II_w(x)$ is non-zero and satisfies
$$
\II_w(x)^{-1} = \{a\in K\mid a^{w_i} x_i\in\OO_K\text{ for }i=0,\ldots,n\}.
$$
Similarly, for every $(n+1)$-tuple $(\fb_0,\ldots,\fb_n)$ of fractional ideals of~$\OO_K$, not all zero, the fractional ideal $\II_w(\fb_0,\ldots,\fb_n)$ is non-zero and satisfies
$$
\II_w(\fb_0,\ldots,\fb_n)^{-1} = \{a\in K\mid a^{w_i}\fb_i\subseteq\OO_K\text{ for }i=0,\ldots,n\}.
$$
\end{remark}

\begin{definition}
Let $\Omega_{K,\infty}$ denote the set of Archimedean places of~$K$, and for each $v\in\Omega_{K,\infty}$, let $|\ |_v\colon K\to\R_{\ge0}$ be the corresponding normalized absolute value.  The \emph{Archimedean size} on $K^{n+1}\setminus\{0\}$ is the function
$$
\begin{aligned}
H_{w,\infty}\colon K^{n+1}\setminus\{0\} &\longrightarrow \R_{>0}\\
x &\longmapsto \prod_{v\in\Omega_{K,\infty}} \max_{0\le i\le n}|x_i|_v^{1/w_i}.
\end{aligned}
$$
\end{definition}

\begin{definition}
\label{def:size}
The \emph{size function} on $\P(w)(K)$ is the function
$$
\begin{aligned}
S_{w,K}\colon \P(w)(K)&\longrightarrow\R_{>0}\\
[x]&\longmapsto\Norm(\II_w(x))^{-1}H_{w,\infty}(x).
\end{aligned}
$$
\end{definition}

It is straightforward to check that $S_{w,K}([x])$ does not depend on the choice of the representative~$x$.

\begin{example}
If $w=(m)$ with $m$ a positive integer and $x\in\Z\setminus\{0\}$ is $m$-th power free, we have
$$
S_{(m),\Q}([x]) = |x|^{1/m}.
$$
\end{example}

\begin{remark}
If $L/K$ is an extension of number fields, we have
$$
S_{(1,\ldots,1),L}(x) = S_{(1,\ldots,1),K}(x)^{[L:K]},
$$
but for general weights $w$ such a relation does not hold.  For example, if $w=(m)$ with $m\ge2$ and $x\in\Z\setminus\{0\}$ is $m$-th power free, then
$$
S_{(m),\Q}([x])=|x|^{1/m},
$$
but
$$
S_{(m),\Q(x^{1/m})}([x])=S_{(m),\Q(x^{1/m})}([1])=1.
$$
\end{remark}

\begin{remark}
Definition~\ref{def:size} is a special case of the notion of height for rational points on algebraic stacks defined by Ellenberg, Satriano and Zureick-Brown \cite{ESZ}.  Namely, as explained in \cite[Section~3.3]{ESZ}, we have
$$
\log S_{w,\Q}(x) = \mathrm{ht}_{\mathcal{L}}(x),
$$
where $\mathrm{ht}_{\mathcal{L}}$ is the height function corresponding to the tautological line bundle $\mathcal{L}$ on~$\P(w)$.
The work of Ellenberg, Satriano and Zureick-Brown was recently used by Boggess and Sankar \cite{Boggess-Sankar} to count elliptic curves over~$\Q$ with a rational $n$-isogeny for $n\in\{2,3,4,5,6,8,9\}$, as mentioned in the introduction.
\end{remark}

\begin{tm}
\label{thm:count}
Let $n$ be a non-negative integer, let $w=(w_0,\ldots,w_n)$ be an $(n+1)$-tuple of positive integers, and let $K$ be a number field.  Let $r_1$, $r_2$, $d_K$, $h_K$, $R_K$, $\mu_K$ and $\zeta_K$ be the number of real places, number of non-real complex places, discriminant, class number, regulator, number of roots of unity and Dedekind $\zeta$-function of~$K$, respectively.
We write
$$
\begin{aligned}
|w|&=w_0+w_1+\cdots+w_n,\\
\mu_K^w&=\frac{\mu_K}{\gcd\{w_0,w_1,\ldots,w_n,\mu_K\}}\\
C_K^w &= \frac{h_K R_K}{\mu_K^w\zeta_K(|w|)}
\biggl(\frac{2^{r_1}(2\pi)^{r_2}}{\sqrt{|d_K|}}\biggr)^{n+1}
|w|^{r_1+r_2-1}.
\end{aligned}
$$
Then we have
$$
\#\{x\in\P(w)(K)\mid S_{w,K}(x)\le T\}\sim C_K^w T^{|w|}
\quad\text{as }T\to\infty.
$$
\end{tm}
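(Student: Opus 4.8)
The plan is to prove \Cref{thm:count} by the geometry of numbers, generalizing Schanuel's theorem on the number of points of bounded height in $\P^n$ (which is the case $w=(1,\ldots,1)$). First I would record the transformation rule $\II_w(\lambda^w x)=(\lambda)\,\II_w(x)$, which follows from unique factorization of fractional ideals, and use it to see that the ideal class of $\II_w(x)$ in the class group of~$K$ depends only on the point $[x]\in\P(w)(K)$. This partitions $\P(w)(K)$ into $h_K$ subsets indexed by $\mathrm{Cl}(K)$. Fixing an integral representative $\fc$ of each class, every point of the corresponding subset has a representative $x$ with $\II_w(x)=\fc$ exactly, unique up to the action $u\cdot x=(u^{w_0}x_0,\ldots,u^{w_n}x_n)$ of $u\in\OO_K^\times$; for such a representative the bound $S_{w,K}([x])\le T$ becomes the Archimedean bound $H_{w,\infty}(x)\le\Norm(\fc)\,T$. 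Thus the theorem reduces to counting $\OO_K^\times$-orbits of tuples in the lattice $\Lambda_\fc=\fc^{w_0}\times\cdots\times\fc^{w_n}\subseteq K_\R^{n+1}$ that are \emph{primitive} (meaning $\II_w(x)=\fc$, not a proper divisor) and satisfy the Archimedean bound.

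The next step is to remove primitivity by M\"obius inversion. Writing $A(\fc,S)$ for the number of $\OO_K^\times$-orbits of nonzero $x\in\Lambda_\fc$ with $H_{w,\infty}(x)\le\Norm(\fc)S$ (no primitivity) and $P(\fc,T)$ for the primitive count, partitioning by the integral ideal $\fd=\fc^{-1}\II_w(x)$ gives
\[
A(\fc,S)=\sum_{\fd}P(\fc\fd,S/\Norm(\fd)),\qquad\text{hence}\qquad P(\fc,T)=\sum_{\fd}\mu(\fd)\,A(\fc\fd,T/\Norm(\fd)),
\]
with $\fd$ ranging over integral ideals and $\mu$ the ideal M\"obius function. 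Two features make $A$ tractable: $H_{w,\infty}$ is invariant under the unit action (by the product formula) and \emph{weighted homogeneous}, in that $x_i\mapsto t^{w_i}x_i$ multiplies it by $t^{[K:\Q]}$ and multiplies Lebesgue volume on $K_\R^{n+1}$ by $t^{|w|[K:\Q]}$. Choosing a fundamental domain $\mathcal F$ for the effective unit action that is a cone in the radial directions, the bounded region $\{H_{w,\infty}\le B\}\cap\mathcal F$ is a weighted dilate of $\{H_{w,\infty}\le1\}\cap\mathcal F$, so its volume is $cB^{|w|}$ with $c=\mathrm{vol}(\{H_{w,\infty}\le1\}\cap\mathcal F)$. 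A Lipschitz-boundary lattice-point count of Davenport/Lang type then gives $A(\fc,S)\sim\bigl(c/\mathrm{covol}(\Lambda_\fc)\bigr)(\Norm(\fc)S)^{|w|}$; since $\mathrm{covol}(\Lambda_\fc)=2^{-r_2(n+1)}|d_K|^{(n+1)/2}\Norm(\fc)^{|w|}$, the powers of $\Norm(\fc)$ cancel and $A(\fc,S)\sim V\,S^{|w|}$ with $V$ independent of the ideal.

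Feeding this into the M\"obius formula and summing over the $h_K$ classes yields
\[
\#\{x\in\P(w)(K)\mid S_{w,K}(x)\le T\}\sim\frac{h_K\,V}{\zeta_K(|w|)}\,T^{|w|},\qquad\sum_{\fd}\frac{\mu(\fd)}{\Norm(\fd)^{|w|}}=\frac{1}{\zeta_K(|w|)}.
\]
The Dirichlet series converges because $|w|\ge2$ in the cases of interest, and to justify interchanging the limit with the infinite sum I would use a uniform bound $A(\fc',S)\ll(1+S)^{|w|}$ together with the vanishing of $A(\fc\fd,T/\Norm(\fd))$ once $\Norm(\fd)$ exceeds a fixed power of~$T$ (no nonzero vector of $\Lambda_{\fc\fd}$ is then small enough), which gives dominated convergence.

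It remains to evaluate $V$, and this is where the shape of $C_K^w$ is produced; I expect this volume computation to be the main obstacle. Passing to coordinates adapted to the Archimedean places and diagonalizing the unit action by the logarithmic map, the integral $c=\mathrm{vol}(\{H_{w,\infty}\le1\}\cap\mathcal F)$ factors into angular integrals contributing $2^{r_1(n+1)}\pi^{r_2(n+1)}$, an integral over a fundamental domain of the log-unit lattice contributing the regulator $R_K$, and a radial integral over a simplex contributing $|w|^{r_1+r_2-1}$; the roots of unity acting trivially under the weighted action (there are $\gcd(w_0,\ldots,w_n,\mu_K)$ of them) reduce the effective torsion to $\mu_K^w$, which enters in the denominator. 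Combining $c$ with $\mathrm{covol}(\Lambda_\fc)$ gives $V=\frac{R_K}{\mu_K^w}\bigl(\frac{2^{r_1}(2\pi)^{r_2}}{\sqrt{|d_K|}}\bigr)^{n+1}|w|^{r_1+r_2-1}$ and hence the asserted constant $C_K^w$. The most delicate point throughout is ensuring that the error term in the lattice-point count is uniform over the infinite family of lattices $\Lambda_{\fc\fd}$, depending only on the covolume and a fixed Lipschitz constant of the weighted-dilated region, since this uniformity is exactly what legitimizes the termwise passage to the limit in the M\"obius sum.
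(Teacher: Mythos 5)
Your proposal is correct in outline, but it takes a different route from the paper only in the sense that the paper does not prove Theorem~\ref{thm:count} at all: its proof consists of citing Deng's Theorem~(A), which establishes exactly this statement for \emph{well-formed} weights by the Schanuel-style geometry-of-numbers argument you describe, together with one remark on how to remove the well-formedness hypothesis. That remark is precisely the point you identify at the end of your volume computation: the roots of unity killed by the weighted action are those of order dividing $\gcd(w_0,\ldots,w_n,\mu_K)$, so generic $\OO_K^\times$-orbits have stabilizer of that order and the effective torsion is $\mu_K^w$ rather than $\mu_K$; the paper phrases this as ``all orbits of points with all coordinates non-zero contain $\mu_K^w$ points'' and propagates the replacement $\mu_K\rightsquigarrow\mu_K^w$ through Deng's Propositions 4.2 and 4.5. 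So your reconstruction --- partition by the ideal class of $\II_w(x)$, reduction to counting unit orbits in $\Lambda_\fc$ with $\II_w(x)=\fc$, M\"obius inversion producing $\zeta_K(|w|)^{-1}$, weighted-homogeneous dilation giving the exponent $|w|$, and the regulator/root-of-unity/radial factorization of the volume --- is in substance the argument the paper outsources, and all the formulas you write (the scaling rule $\II_w(\lambda^w x)=(\lambda)\II_w(x)$, the covolume of $\Lambda_\fc$, the cancellation of $\Norm(\fc)$) check out. What your version buys is self-containedness (the paper leans on an unpublished preprint) and an explicit identification of where each factor of $C_K^w$ arises; what the citation buys is brevity and the delegation of the genuinely delicate step you flag, namely the uniformity of the lattice-point error term over the family $\Lambda_{\fc\fd}$, which is handled in Deng following Schanuel/Lang. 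The one caveat you should make explicit is that the argument (and the stated asymptotic) degenerates when $|w|=1$, i.e.\ $n=0$ and $w=(1)$, since $\zeta_K(1)$ diverges; your parenthetical ``$|w|\ge2$ in the cases of interest'' covers every case the paper uses, but the theorem as stated allows $n=0$.
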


\begin{proof}
This was proved by Deng \cite[Theorem~(A)]{Deng} in the case where $\P(w)$ is \emph{well-formed}, i.e.\ each $n$ elements from~$w$ are coprime.  However, the proof works in general with only minor changes: in the paragraph before \cite[Proposition 4.2]{Deng}, the statement that the group of roots of unity acts effectively has to be replaced by the statement that all orbits of points with all coordinates non-zero contain $\mu_K^w$ points, and the factor $w$ (denoting the number of roots of unity) in \cite[Proposition~4.2, Proposition~4.5, Corollary~4.6 and Theorem~(A)]{Deng} has to be replaced by $\mu_K^w$.
\end{proof}

\begin{remark}
Theorem~\ref{thm:count} also follows from recent results of Darda \cite{Darda} on counting rational points on weighted projective spaces with respect to more general height functions; see in particular \cite[Remark 7.3.2.5]{Darda}.
\end{remark}

In the remainder of this article, we will only consider \emph{weighted projective lines}, i.e.\ one-dimensional weighted projective spaces where the weight is given by a pair $(w_0,w_1)$.

\section{Morphisms between weighted projective lines}

Let $u=(u_0,u_1)$, $w=(w_0,w_1)$ be two pairs of positive integers.  In this section, we classify the morphisms of stacks from $\P(w)$ to $\P(u)$ over a field.  These morphisms form a groupoid, but for simplicity we will only be interested in the set of isomorphism classes of this groupoid, or in other words the \emph{set of morphisms} from $\P(w)$ to $\P(u)$.
We also prove some facts about such morphisms generalizing the corresponding facts about morphisms $\P^1\to\P^1$.

\begin{lemma}
\label{lemma:morphism}
Let $K$ be a field, and let $u=(u_0,u_1)$, $w=(w_0,w_1)$ be two pairs of positive integers.  We consider $R=K[x_0,x_1]$ as a graded $K$-algebra where $x_0$ and~$x_1$ are homogeneous of degrees $w_0$ and~$w_1$, respectively.  Let $P_{u,w}(K)$ be the set of pairs $(f_0,f_1)\in R\times R$ with the following properties:
\begin{enumerate}
\item There exists $e=e(f_0,f_1)\in\Z_{\ge0}$ for which $f_0$ and~$f_1$ are homogeneous of degrees $eu_0$ and $eu_1$, respectively.
\item The homogeneous ideal $\sqrt{(f_0,f_1)}\subseteq R$ contains $(x_0,x_1)$.
\end{enumerate}
Let $K^\times$ act on $P_{u,w}(K)$ by $c(f_0,f_1)=(c^{u_0}f_0,c^{u_1}f_1)$.
Then there is a natural bijection from $K^\times\backslash P_{u,w}(K)$ to the set of morphisms $\P(w)_K\to\P(u)_K$ sending the class of $(f_0,f_1)\in P_{u,w}(K)$ to the morphism induced by the $K$-algebra homomorphism
$$
\begin{aligned}
K[y_0,y_1]&\longrightarrow K[x_0,x_1]\\
y_0&\longmapsto f_0\\
y_1&\longmapsto f_1.
\end{aligned}
$$
\end{lemma}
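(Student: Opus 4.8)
The plan is to describe morphisms $\P(w)_K\to\P(u)_K$ via the functor of points, using the quotient-stack presentation $\P(u)=[\Gm\backslash\A^2_{\ne0}]$. For any $K$-scheme or stack $T$, the universal property of the quotient stack identifies a morphism $T\to\P(u)_K$ with the datum of a $\Gm$-torsor $P\to T$ together with a $\Gm$-equivariant morphism $P\to\A^2_{\ne0}$, where $\Gm$ acts on the target with weights $(u_0,u_1)$. Passing from the torsor $P$ to its associated line bundle $L$, this is the same as a line bundle $L$ on $T$ equipped with two sections $s_0\in\Gamma(T,L^{\otimes u_0})$ and $s_1\in\Gamma(T,L^{\otimes u_1})$ that have no common zero, the no-common-zero condition being exactly what distinguishes $\A^2_{\ne0}$ from $\A^2$. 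This is the weighted analogue of the classical description of morphisms into $\P^n$.

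Next I would specialize to $T=\P(w)_K$ and compute the two ingredients. First, $\mathrm{Pic}(\P(w)_K)\cong\Z$, generated by the tautological bundle $\OO(1)$; hence $L\cong\OO(e)$ for a unique $e\in\Z$, so that $L^{\otimes u_i}\cong\OO(eu_i)$. Second, for every integer $d$ one has $\Gamma(\P(w)_K,\OO(d))=R_d$, the degree-$d$ homogeneous part of $R=K[x_0,x_1]$ with $\deg x_i=w_i$; this holds because the global functions on the torsor $\A^2_{\ne0}$ are $K[x_0,x_1]$ and $\OO(d)$ selects the weight-$d$ part. Thus each $s_i$ becomes a homogeneous polynomial $f_i\in R_{eu_i}$, which is property~(1). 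Since $\Gamma(\OO(d))=0$ for $d<0$, the no-common-zero condition forces $e\ge0$. Finally, $s_0$ and $s_1$ have no common zero on $\P(w)_K$ precisely when the closed substack they cut out is empty, i.e.\ when the radical of the homogeneous ideal $(f_0,f_1)$ contains the irrelevant ideal $(x_0,x_1)$, which is property~(2).

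It then remains to pin down the equivalence relation and the explicit form of the bijection. The line bundle $L\cong\OO(e)$ together with its sections is determined by the morphism only up to the choice of an isomorphism $L\isom\OO(e)$, that is, up to an automorphism of $\OO(e)$, which is multiplication by a scalar $c\in K^\times$; such a rescaling multiplies $s_i$ by $c^{u_i}$, which is exactly the action $c\cdot(f_0,f_1)=(c^{u_0}f_0,c^{u_1}f_1)$ defining $K^\times\backslash P_{u,w}(K)$. To match the stated formula, I would check that the datum $(L=\OO(e),f_0,f_1)$ is precisely the pullback of $(\OO_{\P(u)}(1),y_0,y_1)$ along the morphism induced by the graded $K$-algebra homomorphism $y_i\mapsto f_i$ (graded once $y_i$ is given degree $u_i$, so that the homomorphism multiplies degrees by $e$), and that property~(2) guarantees this morphism is defined on all of $\P(w)_K$.

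The main obstacle is the first step: making the universal property of the quotient stack precise enough to yield the line-bundle-with-sections description, including the identification of $\Gm$-torsors with line bundles and the translation of a weighted equivariant map into sections of $L^{\otimes u_i}$. The second genuinely technical point is the computation of $\mathrm{Pic}(\P(w)_K)$ and of $\Gamma(\OO(d))$ \emph{without} assuming $\P(w)$ is well-formed, together with the verification that the no-common-zero condition is equivalent to the radical condition~(2) at the level of the stack rather than its coarse space. Once these structural facts are in hand, matching the $K^\times$-action and the explicit algebra-homomorphism formula is routine.
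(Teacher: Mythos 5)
Your proposal is correct, but it takes a genuinely different route from the paper. The paper reduces Lemma~\ref{lemma:morphism} to a general statement (Lemma~\ref{lemma:morphism-final} in the appendix, itself deduced from Noohi's description of morphisms out of a quotient stack) describing morphisms $[G\backslash X]\to[H\backslash Y]$ in terms of data living on the \emph{atlas}: a morphism $f\colon\A^2_{\ne0}\to\A^2_{\ne0}$ together with a cocycle $h\colon\Gm\times\A^2_{\ne0}\to\Gm$, modulo the action of $H(X)=K^\times$; the only geometric input needed is that all $\Gm$-torsors on $\A^2_{\ne0}$ are trivial, i.e.\ that $\mathrm{Pic}(\A^2_{\ne0})=0$. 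The exponent $e$ appears there as the weight of the cocycle $h(g,x)=g^e$. You instead use the moduli interpretation of the \emph{target}: a morphism $T\to\P(u)_K$ is a line bundle $L$ on $T$ with base-point-free sections $s_i\in\Gamma(T,L^{\otimes u_i})$, and you then compute $\mathrm{Pic}(\P(w)_K)\cong\Z$ and $\Gamma(\P(w)_K,\OO(d))=R_d$ on the source. Both arguments are sound; yours is the weighted analogue of the classical description of maps to $\P^n$ and has the bonus of identifying the reduced degree intrinsically via $\phi^*\OO_{\P(u)}(1)\cong\OO_{\P(w)}(e)$, while the paper's shifts all the work onto the atlas, where the needed triviality of torsors is immediate and no section or Picard computation on the stack $\P(w)_K$ is required. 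The technical points you flag (torsor-to-line-bundle translation, $\mathrm{Pic}$ and $\Gamma(\OO(d))$ without well-formedness, and the equivalence of the no-common-zero condition with $\sqrt{(f_0,f_1)}\supseteq(x_0,x_1)$) are all standard and do go through; note in particular that the Picard group computation via $\mathrm{Pic}^{\Gm}(\A^2_{\ne0})\cong\mathrm{Hom}(\Gm,\Gm)$ is insensitive to well-formedness, consistent with the paper's citation of \cite[Example 7.27]{FMN}.
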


\begin{proof}
We apply Lemma~\ref{lemma:morphism-final} to the following data over~$K$:
\begin{itemize}
\item $X=\A^2_{\ne0}$ with coordinates $x=(x_0,x_1)$,
\item $Y=\A^2_{\ne0}$ with coordinates $y=(y_0,y_1)$,
\item $G=\Gm$ with coordinate~$g$,
\item $H=\Gm$ with coordinate~$h$,
\item $a\colon G\times X\to X$ is the weight~$w$ action, given on points by $a(g,x)=(g^{w_0}x_0,g^{w_1}x_1)$,
\item $b\colon H\times Y\to Y$ is the weight~$u$ action, given on points by $b(h,y)=(h^{u_0}y_0,h^{u_1}y_1)$.
\end{itemize}
(Note that the lemma applies because the Picard group of~$X$ is trivial.)

We first determine the morphisms $h\colon G\times X\to H$ satisfying the ``cocycle condition'' \eqref{eq:cocycle} of Lemma~\ref{lemma:morphism-final}.  A morphism $h\colon G\times X\to H$ is given by a monomial of the form $h(g,x)=\lambda g^e$ with $\lambda\in K^\times$ and $e\in\Z$, and $h$ satisfies \eqref{eq:cocycle} if and only if $\lambda=1$, i.e.\ $h$ is of the form $h(g,x)=g^e$.

Given $h$ as above, we now determine the morphisms $f\colon X\to Y$ such that the pair $(f,h)$ satisfies condition~\eqref{eq:compat} of Lemma~\ref{lemma:morphism-final}.  Every such $f$ is given by a pair $(f_0,f_1)\in R\times R$, and $(f_0,f_1)$ determines a morphism~$X\to Y$ if and only if $\sqrt{(f_0,f_1)}$ contains $(x_0,x_1)$.  It is straightforward to check that condition~\eqref{eq:compat} translates to the condition that $f_j$ is homogeneous of degree~$eu_j$ for $j=0,1$.  In particular, morphisms $f\colon X\to Y$ such that $(f,h)$ defines a morphism $[G\backslash X]\to [H\backslash Y]$ only exist if $e\ge0$; moreover, $e$ and therefore~$h$ are uniquely determined by~$f$.

Finally, the group $H(X)$ is canonically isomorphic to~$K^\times$, and if $(f,h)$ is a pair as above where $f$ is defined by $(f_0,f_1)$, and $c\in H(X)$, then we have $c(f,h)=(f',h)$ where $f'$ is defined by $(c^{u_0}f_0,c^{u_1}f_1)$.  The lemma therefore follows from Lemma~\ref{lemma:morphism-final}.
\end{proof}

\begin{definition}
\label{def:reduced-degree}
Let $K$ be a field, let $u$, $w$ be two pairs of positive integers, and let $\phi\colon\P(w)_K\to\P(u)_K$ be a morphism.  The \emph{reduced degree} of~$\phi$, denoted by\/ $\degred\phi$, is the unique integer~$e\ge0$ satisfying Lemma~\ref{lemma:morphism}(i) for some (hence every) pair $(f_0,f_1)$ giving rise to~$\phi$ via the bijection of Lemma~\ref{lemma:morphism}.
\end{definition}

\begin{remark}
\label{remark:representable}
A morphism $\phi\colon\P(u)_K\to\P(w)_K$ is representable (by which we mean representable in algebraic spaces) if and only if $\phi$ is faithful as a functor \citestacks{04Y5}.  Moreover, it suffices to check this condition on geometric fibres \cite[Corollary 2.2.7]{Conrad}.  From this one can deduce that $\phi$ is representable if and only if its reduced degree $e=\degred\phi$ satisfies
$$
\gcd(w_0,e)=\gcd(w_1,e)=1.
$$
\end{remark}

\begin{lemma}
\label{lemma:Nakayama}
In the setting of Lemma~\ref{lemma:morphism}, let $(f_0,f_1)\in P_{u,w}(K)$ and assume $e(f_0,f_1)>0$.  Then $R$ is finite over its graded subalgebra $S=K[f_0,f_1]$.
\end{lemma}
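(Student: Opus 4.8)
The plan is to reduce the statement to the assertion that the quotient ring $\bar R=R/(f_0,f_1)R$ is finite-dimensional over~$K$, and then to conclude by a graded version of Nakayama's lemma. Since $f_0$ and~$f_1$ are homogeneous, $S=K[f_0,f_1]$ is a graded subalgebra of~$R$, and because $e>0$ the elements $f_0,f_1$ have strictly positive degrees $eu_0,eu_1$; hence $S_0=K$ and $S$ is a connected $\Z_{\ge0}$-graded $K$-algebra whose irrelevant ideal $S_+$ is generated by $f_0$ and~$f_1$. Consequently $S_+R=(f_0,f_1)R$, so that $\bar R=R/S_+R$.

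First I would use hypothesis~(ii). Since $\sqrt{(f_0,f_1)}$ contains $(x_0,x_1)$, there is an integer $N\ge1$ with $x_0^N,x_1^N\in(f_0,f_1)$. Therefore in $\bar R$ the classes of $x_0$ and~$x_1$ are nilpotent, and as $\bar R$ is generated as a $K$-algebra by these two classes, it is spanned over~$K$ by the finitely many monomials $x_0^ax_1^b$ with $0\le a,b<N$. In particular $\dim_K\bar R<\infty$.

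Next I would invoke graded Nakayama. Regarding $R$ as a $\Z_{\ge0}$-graded $S$-module, it is bounded below (indeed concentrated in non-negative degrees), and $\bar R=R/S_+R$ is finite-dimensional over $S/S_+=K$ by the previous paragraph. Choosing homogeneous elements $m_1,\ldots,m_k\in R$ whose images form a $K$-basis of~$\bar R$, the graded Nakayama lemma yields $R=Sm_1+\cdots+Sm_k$, so that $R$ is finite over~$S$, as desired.

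The only point requiring genuine care is the hypothesis $e>0$: it is exactly what guarantees that $f_0$ and~$f_1$ lie in positive degrees, so that $S$ is connected graded and $S_+=(f_0,f_1)S$; for $e=0$ the elements $f_0,f_1$ are constants, $S=K$, and the conclusion fails. Equivalently, one may phrase the argument by observing that~(ii) makes $f_0,f_1$ a homogeneous system of parameters on the two-dimensional ring~$R$, so that finiteness over $S=K[f_0,f_1]$ follows from graded Noether normalization; the Nakayama formulation above is simply the most direct route to this conclusion.
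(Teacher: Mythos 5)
Your proof is correct and follows essentially the same route as the paper's: both use condition~(ii) to show that $R/(f_0,f_1)R$ is finite-dimensional over~$K$ and then conclude by the graded Nakayama lemma applied to homogeneous lifts of a $K$-basis of that quotient. Your explicit spanning set of monomials $x_0^ax_1^b$ with $0\le a,b<N$ and your remark on why $e>0$ is needed are fine refinements of the same argument.
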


\begin{proof}
We write $R_+=Rx_0+Rx_1$, $S_+=Sf_0+Sf_1$ and $I=Rf_0+Rf_1=RS_+$.  By condition (ii) of Lemma~\ref{lemma:morphism} and the fact that $f_0$ and~$f_1$ are non-constant, we have $\sqrt{I}=R_+$.  Hence for $m$ sufficiently large, we have $R_+^m\subseteq I$, so the graded $K$-algebra $R/I$ is a quotient of $R/R_+^m$ and is therefore finite-dimensional over~$K$.  Choose homogeneous elements $g_1,\ldots,g_r\in R$ such that their images in $R/I$ are a $K$-basis of $R/I$.  In particular, the $g_i$ generate $R/I=R/RS_+$ over~$S$, so we have
$$
R=RS_+ + Sg_1 + \cdots + Sg_r.
$$
Hence the $\Z_{\ge0}$-graded $S$-module $M=R/(Sg_1+\cdots+Sg_r)$ satisfies $S_+M=M$.  It follows from a variant of Nakayama's lemma (see for example Eisenbud \cite[Exercise 4.6]{Eisenbud}) that $M=0$ and hence $R=Sg_1+\cdots+Sg_r$.
\end{proof}

\begin{lemma}
\label{lemma:finite}
Let $K$ be a field, let $u$, $w$ be two pairs of positive integers, and let $\phi\colon\P(w)_K\to\P(u)_K$ be a non-constant representable morphism.  Then $\phi$ is finite.
\end{lemma}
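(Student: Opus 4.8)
The plan is to reduce the statement to a finiteness assertion about the induced morphism of affine cones, and then transport it across the atlas $\A^2_{\ne0}\to\P(u)_K$. Since $\phi$ is non-constant, the integer $e=\degred\phi$ is positive (for $e=0$ both $f_0,f_1$ would be constants and $\phi$ constant). Write $(f_0,f_1)\in P_{u,w}(K)$ for a representative as in Lemma~\ref{lemma:morphism}, and set $R=K[x_0,x_1]$, $S=K[f_0,f_1]$ and $T=K[y_0,y_1]$. The assignment $y_i\mapsto f_i$ exhibits $S$ as a quotient of $T$, and Lemma~\ref{lemma:Nakayama} shows that $R$ is finite over $S$; hence $R$ is a finite $T$-module, i.e.\ the morphism $\tilde f\colon\A^2\to\A^2$ given by $(f_0,f_1)$ is finite. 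Condition~(ii) of Lemma~\ref{lemma:morphism} says that $f_0$ and~$f_1$ have no common zero away from the origin, so $\tilde f^{-1}(0)=\{0\}$ and $\tilde f$ restricts to a finite morphism $f\colon\A^2_{\ne0}\to\A^2_{\ne0}$. By construction $f$ is equivariant for the weight-$w$ and weight-$u$ actions along the homomorphism $\Gm\to\Gm$, $g\mapsto g^e$, and $\phi$ is the morphism it induces on the quotient stacks.

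Next I would check finiteness of $\phi$ fppf-locally on the target. The morphism is representable by hypothesis, and finiteness is fppf-local on the base, so it suffices to prove that the base change of $\phi$ along the smooth surjective atlas $\pi_u\colon\A^2_{\ne0}\to\P(u)_K$ is finite. The technical heart is to identify this base change $P:=\P(w)_K\times_{\P(u)_K}\A^2_{\ne0}$ explicitly. As a base change of the $\Gm$-torsor $\pi_u$, the projection $P\to\P(w)_K$ is again a $\Gm$-torsor; pulling it back along the atlas $\A^2_{\ne0}\to\P(w)_K$ and using that every $\Gm$-torsor on $\A^2_{\ne0}$ is trivial, one computes $\A^2_{\ne0}\times_{\P(w)_K}P\cong\A^2_{\ne0}\times\Gm$, on which the weight-$w$ group acts with weights $(w_0,w_1,-e)$. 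Rigidifying the $\Gm$-coordinate to~$1$, which is possible up to $e$-th roots of unity, then yields a canonical isomorphism $P\cong[\mu_e\backslash\A^2_{\ne0}]$ for the weight-$w$ action of $\mu_e$, under which $P\to\A^2_{\ne0}$ is the morphism induced by the finite, $\mu_e$-invariant map~$f$.

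Finally I would deduce finiteness from this description. By Remark~\ref{remark:representable}, representability of $\phi$ amounts to $\gcd(w_0,e)=\gcd(w_1,e)=1$, which is exactly the condition that $\mu_e$ act freely on $\A^2_{\ne0}$; hence $P$ is an algebraic space and $\A^2_{\ne0}\to P$ is a finite faithfully flat $\mu_e$-torsor. Working over an affine open $\operatorname{Spec}A\subseteq\A^2_{\ne0}$ of the target, the preimage under $f$ is $\operatorname{Spec}B$ with $B$ finite over~$A$, and its image in $P$ is $\operatorname{Spec}B^{\mu_e}$; since the ring of invariants $B^{\mu_e}$ is an $A$-submodule of the finite $A$-module~$B$ and $A$ is Noetherian, it is again finite over~$A$, so $P\to\A^2_{\ne0}$ is finite. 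By fppf descent along $\pi_u$, the morphism $\phi$ is finite. The step I expect to be the main obstacle is the middle one: correctly identifying the base change $P$ with $[\mu_e\backslash\A^2_{\ne0}]$, which requires careful bookkeeping because the structure groups of source and target are matched only through the non-isomorphism $g\mapsto g^e$. (Alternatively, one could bypass the explicit quotient: $\phi$ is automatically proper, being a morphism from a proper $K$-stack to a separated one, so it would suffice to observe that it is quasi-finite — immediate from $f$ having finite fibres — and to invoke that a representable, proper, quasi-finite morphism is finite.)
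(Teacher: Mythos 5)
Your argument is correct, but it takes a genuinely different --- and considerably more explicit --- route than the paper. The paper's proof is essentially your closing parenthetical: since $\P(w)_K$ and $\P(u)_K$ are Deligne--Mumford stacks and $\phi$ is representable and proper, one passes to \'etale atlases, notes that the induced morphism of algebraic spaces is proper and locally quasi-finite (it has relative dimension~$0$), and invokes the general fact that a proper, locally quasi-finite morphism of algebraic spaces is schematic and finite; no explicit model of any fibre product is needed. Your main argument instead identifies the base change $P=\P(w)_K\times_{\P(u)_K}\A^2_{\ne0}$ with $[\mu_e\backslash\A^2_{\ne0}]$, uses the $\gcd$ criterion of Remark~\ref{remark:representable} to see that the $\mu_e$-action is free (so that $P$ is an algebraic space and $\A^2_{\ne0}\to P$ is a finite flat $\mu_e$-torsor), and deduces finiteness of $P\to\A^2_{\ne0}$ from Lemma~\ref{lemma:Nakayama} together with finiteness of invariant rings over a Noetherian base. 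This is precisely the alternative the authors flag in the remark immediately following the lemma (``Alternatively, Lemma~\ref{lemma:finite} may be proved using Lemma~\ref{lemma:Nakayama}''), and your torsor-weight bookkeeping checks out: the $\Gm$-action on $\A^2_{\ne0}\times\Gm$ does have weights $(w_0,w_1,-e)$, and rigidifying the last coordinate leaves exactly $\mu_e$. What your approach buys is a self-contained argument at the level of graded rings, plus an explicit description of the fibre product that makes the degree of $\phi$ visible; what it costs is exactly the care you anticipate --- the rigidification step, and the fact that in characteristic $p\mid e$ the group $\mu_e$ is non-\'etale, so ``free action'' and ``invariants'' must be taken scheme-theoretically and the identification of the quotient of $\operatorname{Spec}B$ with $\operatorname{Spec}B^{\mu_e}$ (with $B$ finite locally free over $B^{\mu_e}$) deserves a citation to the standard theory of quotients by finite flat group schemes. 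The paper's soft argument sidesteps all of this.
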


\begin{proof}
Since $\P(w)_K$ and $\P(u)_K$ are Deligne--Mumford stacks and $\phi$ is representable, we may choose a Cartesian diagram
$$
\xymatrix{
T\ar[d]\ar[r]^{\phi'}& S\ar[d]\\
\P(w)_K\ar[r]^\phi& \P(u)_K}
$$
where $S$ and~$T$ are algebraic spaces and the vertical maps are \'etale coverings.  Then $\phi'$ is proper because $\phi$ is proper, and is locally quasi-finite because $\phi'$ has relative dimension~$0$ \citestacks{04NV}.  In particular, $\phi'$ is representable in schemes \citestacks{0418} and is finite \citestacks{0A4X}.  It follows that $\phi$ is finite.
\end{proof}

\begin{remark}
Alternatively, Lemma~\ref{lemma:finite} may be proved using Lemma~\ref{lemma:Nakayama}.
\end{remark}

\begin{corollary}
\label{cor:normalization}
With the notation of Lemma~\ref{lemma:finite}, let $V\subseteq\P(w)_K$ be a dense open substack.  Then $\P(w)_K$ is the integral closure of\/ $\P(u)_K$ in $V$.
\end{corollary}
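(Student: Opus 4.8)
The plan is to identify $\P(w)_K$ as the normalization of $\P(u)_K$ in~$V$, using the finiteness supplied by \Cref{lemma:finite} together with the normality of a weighted projective line and the uniqueness of normalizations in a prescribed extension of the function field.

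First I would record the properties of the objects involved. The weighted projective line $\P(w)_K$ is smooth over~$K$, hence normal, and it is irreducible of dimension~$1$; thus it is integral, with a well-defined function field~$L$. As $V\subseteq\P(w)_K$ is a dense open substack, $V$ is again integral with the same function field~$L$, and $V\hookrightarrow\P(w)_K$ is dominant. By \Cref{lemma:finite} the morphism~$\phi$ is finite. A finite morphism is closed with $0$-dimensional fibres, so the closed image of~$\phi$ is again $1$-dimensional and hence equals the irreducible target $\P(u)_K$; thus $\phi$ is surjective and dominant, the function field~$M$ of $\P(u)_K$ embeds into~$L$, and $L/M$ is a finite extension.

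Next I would invoke the theory of integral closure in a morphism. By definition the integral closure of $\P(u)_K$ in~$V$ is obtained from the integral closure of $\OO_{\P(u)_K}$ inside the pushforward of $\OO_V$ along the composite $V\to\P(w)_K\xrightarrow{\phi}\P(u)_K$. Since $V$ is integral and dominates $\P(u)_K$ with function field~$L$, this normalization depends only on the extension $L/M$ and coincides with the normalization of $\P(u)_K$ in~$L$. On the other hand $\P(w)_K$ is normal, integral, and finite over $\P(u)_K$ with function field~$L$, so by the uniqueness of the normalization in a given finite extension of the function field, $\P(w)_K$ equipped with~$\phi$ is precisely this normalization; the open immersion $V\hookrightarrow\P(w)_K$ then realizes the canonical dominant map from~$V$. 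This is exactly the assertion of the corollary.

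The one genuine subtlety is that this normalization theory is classically stated for schemes, whereas $\P(w)_K$ and $\P(u)_K$ are Deligne--Mumford stacks. I would resolve this as in the proof of \Cref{lemma:finite}, by choosing a Cartesian square with \'etale coverings by algebraic spaces $S\to\P(u)_K$ and $T\to\P(w)_K$, reducing the statement to the normal integral algebraic space~$T$, finite and dominant over~$S$ and containing the dense open $V\times_{\P(w)_K}T$. There the classical facts---that a normal integral scheme or algebraic space finite over its base is its own normalization in the function field, and that normalization in a dense open depends only on that function field---apply directly. The remaining work is the routine verification that normalization commutes with the \'etale base change and that integrality, normality, and denseness descend along the \'etale coverings; this bookkeeping, rather than any essential difficulty, is where I expect the effort to go.
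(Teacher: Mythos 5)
Your proposal is correct and follows essentially the same route as the paper: the paper's proof likewise combines the finiteness (hence integrality) of $\phi$ from Lemma~\ref{lemma:finite} with the normality of $\P(w)_K$ (which it deduces from $K[x_0,x_1]$ being integrally closed) and then appeals to the characterization of the integral closure. You have simply made explicit the uniqueness-of-normalization argument and the reduction to algebraic spaces that the paper leaves implicit.
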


\begin{proof}
By Lemma~\ref{lemma:finite}, the morphism $\phi$ is finite and in particular integral.  Furthermore, $\P(w)_K$ is normal because $K[x_0,x_1]$ is integrally closed.  This proves the claim.
\end{proof}

\section{Some results on scaling ideals}

Let $K$ be a number field.  We prove two elementary results about scaling ideals.

\begin{lemma}
\label{lemma:poly}
Let $w=(w_0,w_1)$ be a pair of positive integers.  We consider $K[x_0,x_1]$ as a graded $K$-algebra by assigning weight $w_i$ to~$x_i$.  Let $f\in K[x_0,x_1]$ be homogeneous of degree~$d$.  Let $\fa(f)$ be the fractional ideal generated by the coefficients of~$f$.  Then for all $z\in K^2$, we have
$$
f(z) \in \fa(f)\II_w(z)^d.
$$
\end{lemma}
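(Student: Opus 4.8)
The plan is to reduce the asserted containment to a family of local inequalities, one at each nonzero prime $\mathfrak{p}$ of $\OO_K$, and then to prove each of these by combining the ultrametric inequality with a monomial-by-monomial estimate. Since a nonzero fractional ideal is the intersection of its localizations, an element $y\in K$ lies in a fractional ideal $\mathfrak{b}$ if and only if $v_\mathfrak{p}(y)\ge v_\mathfrak{p}(\mathfrak{b})$ for every $\mathfrak{p}$, where $v_\mathfrak{p}$ is the normalized valuation, extended to fractional ideals through their prime factorization. Taking $y=f(z)$ and $\mathfrak{b}=\fa(f)\II_w(z)^d$ and using additivity of $v_\mathfrak{p}$ on products, it therefore suffices to prove
\[
v_\mathfrak{p}(f(z))\ \ge\ v_\mathfrak{p}(\fa(f)) + d\,v_\mathfrak{p}(\II_w(z))
\]
for every $\mathfrak{p}$, with the convention $v_\mathfrak{p}(0)=+\infty$. (The case $f=0$ is trivial, so I assume $f\ne0$.)

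Next I would compute the two fractional ideals on the right locally. Because $\fa(f)$ is generated by the coefficients $c_{a,b}$ of~$f$, one has $v_\mathfrak{p}(\fa(f))=\min_{a,b}v_\mathfrak{p}(c_{a,b})$. For the scaling ideal I would unwind its definition at~$\mathfrak{p}$: the condition $z_i\in\mathfrak{a}^{w_i}$ reads $v_\mathfrak{p}(z_i)\ge w_i\,v_\mathfrak{p}(\mathfrak{a})$, so an integer $v_\mathfrak{p}(\mathfrak{a})$ is admissible exactly when it is at most $\min(v_\mathfrak{p}(z_0)/w_0,\,v_\mathfrak{p}(z_1)/w_1)$; since $\II_w(z)$ is the intersection, hence the smallest such ideal and the one with the largest valuations, this yields
\[
m_\mathfrak{p}\ :=\ v_\mathfrak{p}(\II_w(z))\ =\ \left\lfloor \min\!\left(\frac{v_\mathfrak{p}(z_0)}{w_0},\,\frac{v_\mathfrak{p}(z_1)}{w_1}\right)\right\rfloor .
\]
Establishing this formula correctly is the one genuinely delicate point, and I regard it as the crux: it is the purely local translation of the intersection defining $\II_w(z)$, the same formula being obtainable from the description of $\II_w(z)^{-1}$ in the Remark following the definition of the scaling ideal. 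The floor together with the convention $v_\mathfrak{p}(0)=+\infty$ makes it valid uniformly, including when one coordinate $z_i$ vanishes.

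Finally I would put the pieces together. As $f$ is homogeneous of degree~$d$, every monomial $c_{a,b}x_0^ax_1^b$ occurring in $f$ satisfies $aw_0+bw_1=d$, and $f(z)=\sum_{aw_0+bw_1=d}c_{a,b}z_0^az_1^b$. The ultrametric inequality gives
\[
v_\mathfrak{p}(f(z))\ \ge\ \min_{aw_0+bw_1=d}\bigl(v_\mathfrak{p}(c_{a,b}) + a\,v_\mathfrak{p}(z_0) + b\,v_\mathfrak{p}(z_1)\bigr).
\]
From $m_\mathfrak{p}\le v_\mathfrak{p}(z_i)/w_i$ I obtain $v_\mathfrak{p}(z_0)\ge w_0 m_\mathfrak{p}$ and $v_\mathfrak{p}(z_1)\ge w_1 m_\mathfrak{p}$, whence for each monomial, using $a,b\ge0$,
\[
a\,v_\mathfrak{p}(z_0)+b\,v_\mathfrak{p}(z_1)\ \ge\ (aw_0+bw_1)\,m_\mathfrak{p}\ =\ d\,m_\mathfrak{p},
\]
while $v_\mathfrak{p}(c_{a,b})\ge v_\mathfrak{p}(\fa(f))$. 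Substituting these two bounds into the displayed estimate gives exactly $v_\mathfrak{p}(f(z))\ge v_\mathfrak{p}(\fa(f))+d\,m_\mathfrak{p}$, the required local inequality; as it holds at every $\mathfrak{p}$, the containment $f(z)\in\fa(f)\II_w(z)^d$ follows. I expect no difficulty beyond the careful bookkeeping of the scaling-ideal valuation noted above.
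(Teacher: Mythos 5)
Your proof is correct. It rests on the same monomial-by-monomial observation as the paper's proof: every monomial $c_{a,b}x_0^ax_1^b$ of $f$ has weighted degree $aw_0+bw_1=d$, so the contribution of $z_0^az_1^b$ is controlled by $d$ copies of the scaling ideal. The difference is purely in the packaging: the paper argues globally with fractional ideals, writing $\fm=\II_w(z)$, noting $z_i\in\fm^{w_i}$, and concluding $z_0^{k_0}z_1^{k_1}\in(\fm^{w_0})^{k_0}(\fm^{w_1})^{k_1}=\fm^d$, whence $f(z)\in\fa(f)\fm^d$ in one line; you argue prime by prime with valuations. One remark on what you call the crux: the explicit formula $v_\mathfrak{p}(\II_w(z))=\lfloor\min(v_\mathfrak{p}(z_0)/w_0,\,v_\mathfrak{p}(z_1)/w_1)\rfloor$ is correct but not actually needed. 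The only consequence of it that your argument uses is the inequality $v_\mathfrak{p}(z_i)\ge w_i\,v_\mathfrak{p}(\II_w(z))$, which is precisely the statement $z_i\in\II_w(z)^{w_i}$, i.e.\ the fact (recorded in the Remark following the definition of the scaling ideal) that $\II_w(z)$ itself belongs to the family of ideals it is defined as an intersection of. Taking that as the starting point collapses your local computation to the paper's global one and removes the one step you flagged as delicate.
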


\begin{proof}
We abbreviate
$$
\fm = \II_w(z),
$$
so we have $z_0\in\fm^{w_0}$ and $z_1\in\fm^{w_1}$.  We write
$$
f = \sum_{k_0,k_1} a_{k_0,k_1} x_0^{k_0}x_1^{k_1}
$$
where the sum ranges over all pairs $(k_0,k_1)$ of non-negative integers such that $k_0w_0+k_1w_1=d$, and $a_{k_0,k_1}\in K$.  We now compute
\begin{align*}
f(z_0,z_1) &= \sum_{k_0,k_1}a_{k_0,k_1}z_0^{k_0}z_1^{k_1}\\
&\in \sum_{k_0,k_1}a_{k_0,k_1}(\fm^{w_0})^{k_0}(\fm^{w_1})^{k_1}\\
&=\sum_{k_0,k_1}a_{k_0,k_1}\fm^d\\
&=\fa(f)\fm^d,
\end{align*}
which proves the claim.
\end{proof}

\begin{lemma}
\label{lemma:scaling-root}
Let $z\in K$, and let
$$
h=x^d+c_1x^{d-1}+\cdots+c_d\in K[x]
$$
be a monic polynomial such that $h(z)=0$.  Suppose $\fb_1,\ldots,\fb_d$ are fractional ideals of~$K$ such that $c_i\in\fb_i$ for all $i$.  Then we have
$$
z\in\II_{(1,\ldots,d)}(\fb_1,\ldots,\fb_d).
$$
\end{lemma}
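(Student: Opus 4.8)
The plan is to reduce the statement to the integral closedness of $\OO_K$. Write $\II=\II_{(1,\ldots,d)}(\fb_1,\ldots,\fb_d)$. I would begin from the explicit description of the inverse scaling ideal recorded in the remark following the definition of scaling ideals, which in the present case reads
$$
\II^{-1}=\{a\in K\mid a^i\fb_i\subseteq\OO_K\text{ for }i=1,\ldots,d\}.
$$
Since $\OO_K$ is a Dedekind domain, the nonzero fractional ideal $\II$ is invertible and satisfies $(\II^{-1})^{-1}=\II$; hence $z\in\II$ is equivalent to $z\,\II^{-1}\subseteq\OO_K$. Therefore it suffices to prove that $az\in\OO_K$ for every $a\in K$ with $a^i\fb_i\subseteq\OO_K$ for all~$i$. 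The case $z=0$ is immediate, so I assume $z\neq0$ from now on.

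The key step is then a clearing-of-denominators computation. Fix $a$ as above and multiply the relation $h(z)=0$ by $a^d$. Setting $y=az$ and using the identity $a^d z^{d-i}=a^i(az)^{d-i}$, the equation becomes
$$
y^d+\sum_{i=1}^d(c_ia^i)\,y^{d-i}=0.
$$
Each coefficient now lies in $\OO_K$: from $c_i\in\fb_i$ and $a^i\fb_i\subseteq\OO_K$ we obtain $c_ia^i\in a^i\fb_i\subseteq\OO_K$. Thus $y=az$ satisfies a monic polynomial with coefficients in~$\OO_K$, so it is integral over~$\OO_K$; as $\OO_K$ is integrally closed, $az\in\OO_K$. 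This gives $z\,\II^{-1}\subseteq\OO_K$ and hence $z\in\II$, as desired.

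Alternatively, one could verify $z\in\II$ directly and place-by-place, bypassing the inverse ideal. For a fractional ideal $\fa$ with $\fb_i\subseteq\fa^i$ for all~$i$ and a prime $\mathfrak{p}$ with valuation $v$, set $\alpha=v(\fa)$; then $v(c_i)\ge v(\fb_i)\ge i\alpha$. If one had $v(z)<\alpha$, each term $c_iz^{d-i}$ on the right-hand side of $z^d=-\sum_i c_iz^{d-i}$ would satisfy $v(c_iz^{d-i})\ge i\alpha+(d-i)v(z)>d\,v(z)=v(z^d)$, contradicting the ultrametric inequality; hence $v(z)\ge\alpha$ at every prime, so $z\in\fa$, and intersecting over all such~$\fa$ yields $z\in\II$. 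I expect the only real subtleties to be bookkeeping rather than genuine obstacles: keeping the weight/index conventions straight (the weights are $(1,2,\ldots,d)$, matched to $\fb_1,\ldots,\fb_d$), correctly invoking $(\II^{-1})^{-1}=\II$ together with the inverse formula from the remark, and disposing of the edge case $z=0$. The substitution $y=az$, which turns the hypothesis $a^i\fb_i\subseteq\OO_K$ into honest integrality of $az$, is the one genuinely load-bearing idea.
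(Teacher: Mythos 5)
Your proof is correct and is essentially the same as the paper's: both clear denominators by multiplying $h(z)=0$ by $a^d$ for $a\in\II^{-1}$, observe that $az$ then satisfies a monic polynomial with coefficients $a^ic_i\in a^i\fb_i\subseteq\OO_K$, and conclude $az\in\OO_K$ from integral closedness, whence $z\in(\II^{-1})^{-1}=\II$. The paper handles the degenerate case by assuming not all $\fb_i$ are zero rather than $z\ne0$, but these are equivalent here, and your valuation-theoretic alternative is a fine (if unnecessary) second route.
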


\begin{proof}
If all the $\fb_i$ are zero, then $z$ vanishes and the claim is trivial.  Now assume not all of the $\fb_i$ are zero.  We write
$$
\mathfrak{a} = \II_{(1,\ldots,d)}(\fb_1,\ldots,\fb_d)^{-1}
= \{a\in K\mid a\fb_1,a^2\fb_2,\ldots,a^d\fb_d\subseteq\OO_K\}.
$$
Then for all $a\in\mathfrak{a}$ we have
$$
0=a^dh(z)=(az)^d+(ac_1)(az)^{d-1}+\cdots+(a^dc_d).
$$
By assumption, each $a^ic_i$ lies in $a^i\fb_i$ and hence in~$\OO_K$.  This shows that $az$ is integral over~$\OO_K$.  Thus we have $\mathfrak{a}z\subseteq\OO_K$ and hence $z\in\fa^{-1}$.
\end{proof}

\section{Behaviour of size functions under morphisms}

Let $K$ be a number field.  Let $w=(w_0,w_1)$ and $u=(u_0,u_1)$ be two pairs of positive integers, and let $\phi\colon\P(w)_K\to\P(u)_K$ be a non-constant morphism.  Our goal in this section will be to study how the size of a point in $\P(w)(K)$ relates to the size of its image under~$\phi$.

By Lemma~\ref{lemma:morphism}, the morphism~$\phi$ is defined by a pair of non-constant homogeneous polynomials $f_0,f_1\in K[x_0,x_1]$ of degrees $eu_0$ and $eu_1$, respectively, where $e$ is the reduced degree of~$\phi$.
For $i\in\{0,1\}$, let $\fa_i$ be the fractional ideal generated by the coefficients of~$f_i$.

\begin{lemma}
\label{lemma:easy-direction}
For all $z\in K^2$, we have
$$
\II_u(f(z)) \subseteq \II_u(\fa_0,\fa_1)\II_w(z)^e.
$$
\end{lemma}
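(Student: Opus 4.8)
The plan is to bound $\II_u(f(z))$ from above by exhibiting a single fractional ideal lying in the intersection that defines it. Indeed, $\II_u(f(z))$ is by definition the intersection of all fractional ideals $\fa$ of $\OO_K$ satisfying $f_0(z)\in\fa^{u_0}$ and $f_1(z)\in\fa^{u_1}$, so to prove the asserted containment it suffices to verify that the particular fractional ideal $\fb=\II_u(\fa_0,\fa_1)\II_w(z)^e$ is one of these $\fa$; the containment $\II_u(f(z))\subseteq\fb$ then follows at once.

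To verify this, I would first apply Lemma~\ref{lemma:poly} to each of $f_0$ and $f_1$. Since $f_i$ is homogeneous of degree $eu_i$ and its coefficients generate the fractional ideal $\fa_i$, Lemma~\ref{lemma:poly} yields $f_i(z)\in\fa_i\II_w(z)^{eu_i}$ for $i=0,1$. Writing $\fm=\II_w(z)$, this reads $f_i(z)\in\fa_i\fm^{eu_i}$.

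Next I would invoke the definition of $\fc=\II_u(\fa_0,\fa_1)$ as the intersection of all fractional ideals $\fa$ with $\fa_0\subseteq\fa^{u_0}$ and $\fa_1\subseteq\fa^{u_1}$; in particular $\fc$ itself satisfies $\fa_i\subseteq\fc^{u_i}$ for $i=0,1$. Substituting into the previous estimate gives
$$
f_i(z)\in\fa_i\fm^{eu_i}\subseteq\fc^{u_i}\fm^{eu_i}=(\fc\fm^e)^{u_i}=\fb^{u_i}
\qquad(i=0,1).
$$
Hence $\fb$ is indeed one of the fractional ideals occurring in the intersection defining $\II_u(f(z))$, and therefore $\II_u(f(z))\subseteq\fb=\II_u(\fa_0,\fa_1)\II_w(z)^e$, as claimed.

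I do not expect a genuine obstacle here: because scaling ideals are defined as intersections, establishing an upper bound ``$\subseteq$'' reduces to producing a single admissible fractional ideal, and Lemma~\ref{lemma:poly} supplies precisely the coefficient-wise estimate needed to exhibit $\fb$. The genuinely delicate content—a matching reverse inclusion, which would require controlling the finitely many primes at which the containments coming from Lemma~\ref{lemma:poly} fail to be equalities—is exactly what is postponed, which is why this result is only the \emph{easy direction}.
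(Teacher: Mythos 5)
Your proof is correct and follows essentially the same route as the paper: both hinge on applying Lemma~\ref{lemma:poly} to get $f_i(z)\in\fa_i\II_w(z)^{eu_i}$ and then concluding by formal properties of scaling ideals. The only cosmetic difference is that the paper invokes monotonicity together with the identity $\II_u(\fa_0\fm^{eu_0},\fa_1\fm^{eu_1})=\II_u(\fa_0,\fa_1)\fm^e$, whereas you exhibit $\II_u(\fa_0,\fa_1)\II_w(z)^e$ directly as an admissible ideal in the defining intersection; these amount to the same computation.
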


\begin{proof}
We abbreviate
$$
\fm = \II_w(z).
$$
Since $f_i$ is homogeneous of degree $eu_i$, Lemma~\ref{lemma:poly} gives
$$
f_i(z)\in \fa_i \fm^{eu_i}.
$$
It follows that
$$
\II_u(f(z)) \subseteq \II_u(\fa_0\fm^{eu_0},\fa_1\fm^{eu_1}) = \II_u(\fa_0,\fa_1)\fm^e,
$$
which proves the claim.
\end{proof}

For $i\in\{0,1\}$, we write the rational number $w_i/e$ in reduced form as
$$
\frac{w_i}{e}=\frac{\nu_i}{\delta_i}
$$
with $\nu_i,\delta_i$ coprime positive integers.

By Lemma~\ref{lemma:Nakayama}, there are integers $d_i>0$ and polynomials $g_{i,j}\in K[y_0,y_1]$ (for $i=0,1$ and $j=1,\ldots,d_i$) satisfying
\begin{equation}
x_i^{d_i}+g_{i,1}(f_0,f_1)x_i^{d_i-1}+\cdots+g_{i,d_i}(f_0,f_1)=0
\quad\text{in }K[x_0,x_1].
\label{eq:star0}
\end{equation}
After taking homogeneous components of degree~$d_iw_i$, we may and do assume that each $g_{i,j}(f_0,f_1)$ is homogeneous of degree $jw_1$.  After dividing by a power of~$x_i$ if necessary, we may and do also assume $g_{i,d_i}\ne0$.  We write
$$
g_{i,j}=\sum_{\substack{k_0,k_1\ge0\\ e(k_0u_0+k_1u_1)=jw_i}}
\gamma_{i,j,(k_0,k_1)}y_0^{k_0}y_1^{k_1}
\quad\text{with }\gamma_{i,j,(k_0,k_1)}\in K.
$$
In particular, if $g_{i,j}\ne0$, then $e$ divides $jw_i$, so $j$ is a multiple of the denominator of $w_i/e$; in other words, there is a positive integer~$l$ with $j=l\delta_i$.  Since we have ensured that $g_{i,d_i}$ is non-zero, we obtain in particular a positive integer~$m_i$ with
$$
d_i=m_i\delta_i,
$$
and all $j$ for which $g_{i,j}$ does not vanish are of the form $j=l\delta_i$ with $1\le l\le m_i$.  We can therefore rewrite \eqref{eq:star0} as
\begin{equation}
x_i^{m_i\delta_i}+\sum_{l=1}^{m_i} g_{i,l\delta_i}(f_0,f_1)x_i^{(m_i-l)\delta_i}=0
\quad\text{in }K[x_0,x_1]
\label{eq:star1}
\end{equation}
and note that
$$
g_{i,l\delta_i}=\sum_{\substack{k_0,k_1\ge0\\ k_0u_0+k_1u_1=l\nu_i}}
\gamma_{i,l\delta_j,(k_0,k_1)}y_0^{k_0}y_1^{k_1}.
$$
For $i\in\{0,1\}$ and $1\le l\le m_i$, we write $\fc_{i,l}$ for the fractional ideal generated by the coefficients of~$g_{i,l\delta_i}$, i.e.
$$
\fc_{i,l}=(\gamma_{i,l\delta_i,(k_0,k_1)}\mid k_0,k_1\ge0, k_0u_0+k_1u_1=l\nu_i).
$$
For $i\in\{0,1\}$, we write
$$
\fd_i=\II_{(1,\ldots,m_i)}(\fc_{i,},\ldots,\fc_{i,m_i}).
$$

\begin{lemma}
For all $z\in K^2$ and $i\in\{0,1\}$, we have
$$
z_i^{\delta_i}\in\fd_i\II_u(f(z))^{\nu_i}.
$$
\end{lemma}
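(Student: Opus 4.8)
The plan is to read off $z_i^{\delta_i}$ as a root of an explicit monic polynomial from the integral dependence relation \eqref{eq:star1}, and then feed this relation into the two scaling-ideal lemmas of the previous section. Fix $i\in\{0,1\}$ and assume $z\ne0$ (the case $z=0$ being trivial, as the left-hand side then vanishes). First I would evaluate \eqref{eq:star1} at the point $z=(z_0,z_1)$ and substitute $\zeta=z_i^{\delta_i}$, obtaining
$$
\zeta^{m_i}+\sum_{l=1}^{m_i} g_{i,l\delta_i}(f(z))\,\zeta^{m_i-l}=0.
$$
Thus $\zeta=z_i^{\delta_i}$ is a root of the monic polynomial $h(x)=x^{m_i}+\sum_{l=1}^{m_i}C_lx^{m_i-l}$ with coefficients $C_l=g_{i,l\delta_i}(f(z))$, which is exactly the shape required by Lemma~\ref{lemma:scaling-root}.

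The second step is to control the coefficients $C_l$. Viewing $K[y_0,y_1]$ as graded by the weights $u$, the displayed description of the monomials of $g_{i,l\delta_i}$ shows that this polynomial is homogeneous of degree $l\nu_i$, so Lemma~\ref{lemma:poly} applied with weights $u$ to the point $f(z)\in K^2$ yields
$$
C_l=g_{i,l\delta_i}(f(z))\in\fc_{i,l}\,\II_u(f(z))^{l\nu_i}.
$$
Since $\phi$ is non-constant and $z\ne0$, condition (ii) of Lemma~\ref{lemma:morphism} forces $f(z)\ne0$, so $\II_u(f(z))$ is a non-zero, hence invertible, fractional ideal; I would record this for the last step. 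Now Lemma~\ref{lemma:scaling-root} applied to $h$ with $\fb_l=\fc_{i,l}\II_u(f(z))^{l\nu_i}$ gives
$$
z_i^{\delta_i}\in\II_{(1,\ldots,m_i)}\bigl(\fc_{i,1}\II_u(f(z))^{\nu_i},\ldots,\fc_{i,m_i}\II_u(f(z))^{m_i\nu_i}\bigr).
$$

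The final step, which I expect to be the only delicate point, is to pull the factor $\II_u(f(z))^{\nu_i}$ out of this scaling ideal so as to recover $\fd_i\II_u(f(z))^{\nu_i}$. Writing $\II=\II_u(f(z))$ and using invertibility, the map $\fa\mapsto\fa\II^{-\nu_i}$ is a bijection on fractional ideals under which the defining condition $\fc_{i,l}\II^{l\nu_i}\subseteq\fa^l$ becomes $\fc_{i,l}\subseteq(\fa\II^{-\nu_i})^l$. Because multiplication by the invertible ideal $\II^{\nu_i}$ commutes with arbitrary intersections, taking the intersection over all admissible $\fa$ then gives
$$
\II_{(1,\ldots,m_i)}\bigl(\fc_{i,1}\II^{\nu_i},\ldots,\fc_{i,m_i}\II^{m_i\nu_i}\bigr)=\II_{(1,\ldots,m_i)}(\fc_{i,1},\ldots,\fc_{i,m_i})\,\II^{\nu_i}=\fd_i\,\II_u(f(z))^{\nu_i},
$$
which is the claim. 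The crux here is that the exponent $l\nu_i$ of $\II$ attached to the $l$-th coefficient is exactly $l$ times a fixed power $\nu_i$; this uniform linear dependence on $l$ is precisely what lets the power of $\II$ factor through the weighted scaling ideal, and it is where the reduction of $w_i/e$ to $\nu_i/\delta_i$ pays off.
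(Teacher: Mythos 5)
Your proposal is correct and follows essentially the same route as the paper's proof: substitute $z$ into the integral dependence relation \eqref{eq:star1} to exhibit $z_i^{\delta_i}$ as a root of a monic polynomial, bound the coefficients via Lemma~\ref{lemma:poly}, apply Lemma~\ref{lemma:scaling-root}, and then factor $\II_u(f(z))^{\nu_i}$ out of the resulting scaling ideal. The only difference is that you spell out the final factoring step (and the degenerate case $z=0$), which the paper asserts without further justification.
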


\begin{proof}
For $i=0,1$ and $l=0,\ldots,m_i$, we write
$$
c_{i,l}=g_{i,l\delta_i}(f(z))\in K.
$$
Substituting $(x_0,x_1)=(z_0,z_1)$ in~\eqref{eq:star1}, we obtain
$$
(z_i^{\delta_i})^{m_i}+\sum_{l=1}^{m_i} c_{i,l}(z_i^{\delta_i})^{m_i-l}=0
\quad\text{for }i=0,1.
$$
We abbreviate
$$
\fm=\II_u(f(z)).
$$
Since $g_{i,l\delta_j}$ is homogeneous of degree~$l\nu_i$, Lemma~\ref{lemma:poly} gives
$$
c_{i,l}\in\fc_{i,l}\fm^{l\nu_i}.
$$
Applying Lemma~\ref{lemma:scaling-root}, we obtain
$$
z_i^{\delta_i} \in \II_{(1,\ldots,m_i)}(\fc_{i,1}\fm^{\nu_i},\ldots,\fc_{i,m_i}\fm^{m_i\nu_i})
\quad\text{for }i=0,1.
$$
This last ideal equals $\II_{(1,\ldots,m_i)}(\fc_{i,1},\ldots,\fc_{i,m_i})\fm^{\nu_i} = \fd_i\fm^{\nu_i}$.
\end{proof}

\begin{corollary}
\label{cor:hard-direction}
For all $(z_0,z_1)\in K^2$ and $i\in\{0,1\}$, we have
$$
\II_{(\nu_0,\nu_1)}(z_0^{\delta_0},z_1^{\delta_1}) \subseteq
\II_{(\nu_0,\nu_1)}(\fd_0,\fd_1) \II_u(f(z)).
$$
\end{corollary}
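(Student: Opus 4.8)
The plan is to obtain the corollary as a direct repackaging of the preceding lemma, converting the two scalar memberships $z_i^{\delta_i}\in\fd_i\II_u(f(z))^{\nu_i}$ into a single containment of scaling ideals. Abbreviate $\mathfrak{e}=\II_{(\nu_0,\nu_1)}(\fd_0,\fd_1)$ and $\fm=\II_u(f(z))$, so that the asserted inclusion reads $\II_{(\nu_0,\nu_1)}(z_0^{\delta_0},z_1^{\delta_1})\subseteq\mathfrak{e}\fm$.

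First I would exploit the universal property encoded in the definition of the scaling ideal: by definition $\II_{(\nu_0,\nu_1)}(z_0^{\delta_0},z_1^{\delta_1})$ is the intersection of all fractional ideals $\fa$ with $z_0^{\delta_0}\in\fa^{\nu_0}$ and $z_1^{\delta_1}\in\fa^{\nu_1}$, hence it is contained in any single fractional ideal that itself satisfies these two conditions. It therefore suffices to check that $\mathfrak{e}\fm$ is one of them, i.e.\ that $z_i^{\delta_i}\in(\mathfrak{e}\fm)^{\nu_i}=\mathfrak{e}^{\nu_i}\fm^{\nu_i}$ for $i=0,1$. The preceding lemma already gives $z_i^{\delta_i}\in\fd_i\fm^{\nu_i}$; since $\fd_i\subseteq\mathfrak{e}^{\nu_i}$ would then yield $z_i^{\delta_i}\in\fd_i\fm^{\nu_i}\subseteq\mathfrak{e}^{\nu_i}\fm^{\nu_i}=(\mathfrak{e}\fm)^{\nu_i}$, everything reduces to the purely ideal-theoretic containment $\fd_i\subseteq\mathfrak{e}^{\nu_i}$.

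The step I expect to be the only real content is this last containment, which is the assertion that the scaling ideal $\mathfrak{e}$, defined as an \emph{intersection}, itself satisfies the membership conditions defining it. This is not formal for an arbitrary family, because $(\bigcap_\fa\fa)^{\nu_i}$ and $\bigcap_\fa\fa^{\nu_i}$ differ in general; but over the Dedekind domain $\OO_K$ it holds by a prime-by-prime computation. For each prime $\mathfrak{p}$ the conditions $\fd_j\subseteq\fa^{\nu_j}$ bound $v_\mathfrak{p}(\fa)$ above by $\min_{j}v_\mathfrak{p}(\fd_j)/\nu_j$, and the most restrictive (largest-valuation) admissible choice gives $v_\mathfrak{p}(\mathfrak{e})=\lfloor\min_j v_\mathfrak{p}(\fd_j)/\nu_j\rfloor$; hence $\nu_i v_\mathfrak{p}(\mathfrak{e})\le v_\mathfrak{p}(\fd_i)$ for every $\mathfrak{p}$, which is precisely $\fd_i\subseteq\mathfrak{e}^{\nu_i}$. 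Finally, the degenerate case $f(z)=0$ is immediate, since condition (ii) of Lemma~\ref{lemma:morphism} forces $z=0$ and both sides of the asserted inclusion are then the zero ideal.
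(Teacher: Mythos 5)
Your proof is correct and follows the route the paper intends: the paper states this corollary with no proof at all, treating it as an immediate consequence of the preceding lemma, and your argument supplies exactly the omitted details. In particular, your prime-by-prime verification that $\fd_i\subseteq\II_{(\nu_0,\nu_1)}(\fd_0,\fd_1)^{\nu_i}$ (i.e.\ that the scaling ideal, though defined as an intersection, satisfies its own defining conditions over the Dedekind domain $\OO_K$) is precisely the point the paper leaves implicit, and your treatment of the degenerate case $f(z)=0$ is also correct.
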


\begin{tm}
\label{thm:asymp}
Let $K$ be a number field, let $u$, $w$ be two pairs of positive integers, and let $\phi\colon\P(w)_K\to\P(u)_K$ be a non-constant morphism.  Let $e$ be the reduced degree of~$\phi$ (see Definition~\ref{def:reduced-degree}), and for $i=0,1$ write $w_i/e=\nu_i/\delta_i$ with $\nu_i$, $\delta_i$ coprime positive integers.  Then for all $z\in\P(w)(K)$, we have
$$
S_u(\phi(z)) \ll S_w(z)^e
$$
and
$$
S_u(\phi(z)) \gg S_{(\nu_0,\nu_1)}(z_0^{\delta_0},z_1^{\delta_1}),
$$
where the implied constants depend only on $K$, $u$, $w$ and~$\phi$.
\end{tm}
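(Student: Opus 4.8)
The plan is to exploit the fact that the size function factors as a product of an ``ideal part'' and an ``Archimedean part'', namely $S_u(\phi(z))=\Norm(\II_u(f(z)))^{-1}H_{u,\infty}(f(z))$, and then to establish each of the two inequalities factor by factor. Fix a representative $z\in K^2\setminus\{0\}$ of a point of $\P(w)(K)$; since the stated quantities are independent of the representative, this loses no generality. For the upper bound I would control $\Norm(\II_u(f(z)))^{-1}$ and $H_{u,\infty}(f(z))$ separately against $\Norm(\II_w(z))^{-e}$ and $H_{w,\infty}(z)^e$; for the lower bound I would control the same two factors of $S_u(\phi(z))$ from below against the corresponding factors of $S_{(\nu_0,\nu_1)}(z_0^{\delta_0},z_1^{\delta_1})$.

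The ideal parts are handled entirely by the results already proved. For the upper bound, \Cref{lemma:easy-direction} gives $\II_u(f(z))\subseteq\II_u(\fa_0,\fa_1)\II_w(z)^e$; since an inclusion of fractional ideals reverses the inequality of norms, this yields $\Norm(\II_u(f(z)))^{-1}\le\Norm(\II_u(\fa_0,\fa_1))^{-1}\Norm(\II_w(z))^{-e}$, and the leading factor is a constant depending only on~$\phi$. Symmetrically, \Cref{cor:hard-direction} gives $\II_{(\nu_0,\nu_1)}(z_0^{\delta_0},z_1^{\delta_1})\subseteq\II_{(\nu_0,\nu_1)}(\fd_0,\fd_1)\II_u(f(z))$, whence $\Norm(\II_u(f(z)))^{-1}\ge\Norm(\II_{(\nu_0,\nu_1)}(\fd_0,\fd_1))\,\Norm(\II_{(\nu_0,\nu_1)}(z_0^{\delta_0},z_1^{\delta_1}))^{-1}$, again with a constant factor depending only on~$\phi$.

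For the Archimedean parts the key preliminary observation is that the two archimedean heights on the right-hand sides actually coincide: using $w_i\delta_i=e\nu_i$ one computes $|z_i^{\delta_i}|_v^{1/\nu_i}=(|z_i|_v^{1/w_i})^e$ at each place, so that $H_{(\nu_0,\nu_1),\infty}(z_0^{\delta_0},z_1^{\delta_1})=H_{w,\infty}(z)^e$. Thus both inequalities reduce to showing $H_{u,\infty}(f(z))\asymp H_{w,\infty}(z)^e$, and it suffices to prove, at each (of finitely many) Archimedean place~$v$, the two-sided bound $\max_i|f_i(z)|_v^{1/u_i}\asymp\bigl(\max_i|z_i|_v^{1/w_i}\bigr)^e$ for $z\in K_v^2\setminus\{0\}$, with implied constants depending on $v,u,w,\phi$. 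The crucial point is that both sides are homogeneous of the same degree under the weighted $\Gm$-action: replacing $z$ by $(\lambda^{w_0}z_0,\lambda^{w_1}z_1)$ scales the right-hand side by $|\lambda|_v^e$ and, because $f_i$ is homogeneous of degree $eu_i$, scales $|f_i(z)|_v^{1/u_i}$ by $|\lambda|_v^e$ as well. Hence both quantities may be compared on the compact set $\{z\in K_v^2:\max_i|z_i|_v^{1/w_i}=1\}$, where they are continuous functions of~$z$.

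I expect the archimedean \emph{lower} bound to be the main obstacle: the upper bound is a routine triangle-inequality estimate on a homogeneous polynomial, but the lower bound requires that $\max(|f_0(z)|_v^{1/u_0},|f_1(z)|_v^{1/u_1})$ be bounded away from zero on the compact set above. This is exactly where condition~(ii) of \Cref{lemma:morphism} enters: since $\sqrt{(f_0,f_1)}\supseteq(x_0,x_1)$, the polynomials $f_0$ and $f_1$ have no common zero other than the origin, so the continuous positive function $\max(|f_0|_v^{1/u_0},|f_1|_v^{1/u_1})$ attains a positive minimum on the compactum, furnishing the constant. Taking the product of the per-place bounds over the finitely many Archimedean places gives $H_{u,\infty}(f(z))\asymp H_{w,\infty}(z)^e$, and combining this with the two ideal-part estimates yields the two asserted inequalities.
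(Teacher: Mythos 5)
Your proposal is correct and follows essentially the same route as the paper: the ideal parts are handled exactly as in the paper via Lemma~\ref{lemma:easy-direction} and Corollary~\ref{cor:hard-direction}, and your per-place compactness/homogeneity argument for $H_{u,\infty}(f(z))\asymp H_{w,\infty}(z)^e$ is precisely the paper's statement that the functions $q_v(z)=\max_i|f_i(z)|_v^{1/u_i}/\max_i|z_i|_v^{e/w_i}$ are bounded on the compact spaces $\P(w)(K_v)$, together with the identity $H_{(\nu_0,\nu_1),\infty}(z_0^{\delta_0},z_1^{\delta_1})=H_{w,\infty}(z)^e$. If anything, you make explicit two points the paper leaves implicit, namely that the lower bound on $q_v$ rests on $f_0,f_1$ having no common zero away from the origin, and the exponent computation $\delta_i/\nu_i=e/w_i$.
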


\begin{proof}
Lemma~\ref{lemma:morphism} gives us homogeneous polynomials $f_0,f_1\in K[x_0,x_1]$ such that $\phi$ is defined by $(f_0,f_1)$.
For every Archimedean place $v$ of~$K$, the set $\P(w)(K_v)$ of points of $\P(w)$ over the completion $K_v$ of~$K$ at~$v$ is in a natural way a compact topological space.  We consider the function
\begin{align*}
q_v\colon\P(w)(K_v)&\longrightarrow\R_{>0}\\
z&\longmapsto\frac{\max_{0\le i\le 1}|f_i(z)|_v^{1/u_i}}{\max_{0\le i\le 1}|z_i|_v^{e/w_i}}.
\end{align*}
Using the definitions of the size functions and the $q_v$, we compute
\begin{align*}
\frac{S_u(\phi(z))}{S_w(z)^e} &= \frac{\Norm(\II_u(f(z)))^{-1} H_{u,\infty}(f(z))}{\Norm(\II_w(z))^{-e} H_{w,\infty}(z)^e}\\
&= \Norm\bigl(\II_w(z)^e\II_u(f(z))^{-1}\bigr) \prod_{v\in\Omega_{K,\infty}} q_v(z)
\end{align*}
and
\begin{align*}
\frac{S_u(\phi(z))}{S_{(\nu_0,\nu_1)}(z_0^{\delta_0},z_1^{\delta_1})} &= \frac{\Norm(\II_u(f(z)))^{-1} H_{u,\infty}(f(z))}{\Norm(\II_{(\nu_0,\nu_1)}(z_0^{\delta_0},z_1^{\delta_1}))^{-1}H_{(\nu_0,\nu_1),\infty}(z_0^{\delta_0},z_1^{\delta_1})}\\
&= \Norm\bigl(\II_{(\nu_0,\nu_1)}(z_0^{\delta_0},z_1^{\delta_1})\II_u(f(z))^{-1}\bigr) \prod_{v\in\Omega_{K,\infty}} q_v(z).
\end{align*}
Let $\fa_i$, $\fd_i$ ($i=0,1$) be the fractional ideals defined earlier.  By Lemma~\ref{lemma:easy-direction}, we have
$$
\II_w(z)^e\II_u(f(z))^{-1}\supseteq\II_u(\fa_0,\fa_1)^{-1},
$$
and hence
$$
\Norm\bigl(\II_w(z)^e\II_u(f(z))^{-1}\bigr) \le \Norm(\II_u(\fa_0,\fa_1))^{-1}.
$$
By Corollary~\ref{cor:hard-direction}, we have
$$
\II_{(\nu_0,\nu_1)}(z_0^{\delta_0},z_1^{\delta_1})\II_u(f(z))^{-1} \subseteq \II_{(\nu_0,\nu_1)}(\fd_0,\fd_1),
$$
and hence
$$
\Norm\bigl(\II_{(\nu_0,\nu_1)}(z_0^{\delta_0},z_1^{\delta_1})\II_u(f(z))^{-1}\bigr) \ge \Norm(\II_{(\nu_0,\nu_1)}(\fd_0,\fd_1)).
$$
Finally, for each $v\in\Omega_{K,\infty}$, the function $q_v\colon\P(w)(K_v)\to\R_{>0}$ is bounded by compactness.  From this the theorem follows.
\end{proof}

\begin{corollary}
\label{cor:asymp}
In the setting of Theorem~\ref{thm:asymp}, suppose $e=1$ or $w=(1,1)$ holds.  Then for all $z\in\P(w)(K)$, we have
$$
S_u(\phi(z))\asymp S_w(z)^e,
$$
where the implied constants depend only on $K$, $u$, $w$ and~$\phi$.
\end{corollary}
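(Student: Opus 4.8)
The plan is to combine the two bounds already supplied by Theorem~\ref{thm:asymp}. The upper bound $S_u(\phi(z)) \ll S_w(z)^e$ is precisely the first inequality of that theorem and requires no further argument, and it carries the asserted dependence of the implied constant on $K$, $u$, $w$ and~$\phi$. The entire content of the corollary therefore lies in producing the matching lower bound $S_u(\phi(z)) \gg S_w(z)^e$. For this I would start from the second inequality $S_u(\phi(z)) \gg S_{(\nu_0,\nu_1)}(z_0^{\delta_0},z_1^{\delta_1})$ and show that, under either of the two hypotheses, the right-hand side equals $S_w(z)^e$ exactly; the claim then follows with no new constants.

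First I would dispose of the case $e=1$. Here $w_i/e=w_i$ is already in reduced form, so $\nu_i=w_i$ and $\delta_i=1$; consequently $(\nu_0,\nu_1)=w$ and $(z_0^{\delta_0},z_1^{\delta_1})=(z_0,z_1)$, and the second inequality reads $S_u(\phi(z)) \gg S_w(z) = S_w(z)^e$ directly.

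The case $w=(1,1)$ is where a short computation is needed. Now $w_i/e=1/e$ is in reduced form, so $\nu_i=1$ and $\delta_i=e$, and the quantity to analyze is $S_{(1,1)}(z_0^e,z_1^e)$. I would show this equals $S_{(1,1)}(z)^e$ on the nose by treating the two factors of the size function separately. For the Archimedean factor this is immediate, since $H_{(1,1),\infty}(z_0^e,z_1^e)=\prod_{v\in\Omega_{K,\infty}}\max(|z_0|_v,|z_1|_v)^e=H_{(1,1),\infty}(z)^e$. For the ideal factor the key point is that $(z_0^e,z_1^e)=(z_0,z_1)^e$ as fractional ideals: comparing $\mathfrak p$-adic valuations at each prime $\mathfrak p$ of~$\OO_K$ gives $\min(e\,v_{\mathfrak p}(z_0),e\,v_{\mathfrak p}(z_1))=e\min(v_{\mathfrak p}(z_0),v_{\mathfrak p}(z_1))$, so the two ideals coincide locally everywhere (and the case of a vanishing coordinate is handled by the same formula). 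Recalling that $\II_{(1,1)}(x)=(x_0,x_1)$, this yields $\Norm(\II_{(1,1)}(z_0^e,z_1^e))=\Norm(\II_{(1,1)}(z))^e$, and multiplying the two factors gives $S_{(1,1)}(z_0^e,z_1^e)=S_{(1,1)}(z)^e=S_w(z)^e$.

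In either case the second inequality of Theorem~\ref{thm:asymp} becomes $S_u(\phi(z)) \gg S_w(z)^e$, which together with the upper bound yields $S_u(\phi(z)) \asymp S_w(z)^e$. I do not expect a genuine obstacle here: the only subtle point is the identity $(z_0^e,z_1^e)=(z_0,z_1)^e$, which fails for general weights—this is exactly why the hypothesis $e=1$ or $w=(1,1)$ is imposed, since otherwise $S_{(\nu_0,\nu_1)}(z_0^{\delta_0},z_1^{\delta_1})$ and $S_w(z)^e$ genuinely differ—but holds in the equal-weight case by the valuation computation above.
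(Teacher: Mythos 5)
Your proposal is correct and follows essentially the same route as the paper: in both cases one identifies $S_{(\nu_0,\nu_1)}(z_0^{\delta_0},z_1^{\delta_1})$ with $S_w(z)^e$ and then invokes the two inequalities of Theorem~\ref{thm:asymp}. The paper simply asserts the identity $S_{(1,1)}(z_0^e,z_1^e)=S_{(1,1)}(z_0,z_1)^e$ without proof, whereas you verify it via the Archimedean and ideal factors; that verification is accurate.
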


\begin{proof}
First suppose $e=1$.  Then we have $\delta_i=1$ and $\nu_i=w_i$ for $i\in\{0,1\}$, and hence
$$
S_{\nu_0,\nu_1}(z_0^{\delta_0},z_1^{\delta_1}) = S_w(z) = S_w(z)^e.
$$
Next suppose $w=(1,1)$.  Then we have $\delta_i=e$ and $\nu_i=1$ for $i\in\{0,1\}$, and hence
$$
S_{(\nu_0,\nu_1)}(z_0^{\delta_0},z_1^{\delta_1}) = S_{(1,1)}(z_0^e,z_1^e) = S_{(1,1)}(z_0,z_1)^e = S_w(z)^e.
$$
In both cases, Theorem~\ref{thm:asymp} gives the result.
\end{proof}

\begin{remark}
The condition ``$e=1$ or $w=(1,1)$'' in Corollary~\ref{cor:asymp} is reminiscent of the condition ``$n=1$ or $m=1$'' in \cite[Proposition 2.1]{HS}.
\end{remark}

\begin{remark}
By Remark~\ref{remark:representable}, the assumption $e=1$ or $w=(1,1)$ implies that every morphism satisfying the conditions of Corollary~\ref{cor:asymp} is representable.  However, the conclusion of Corollary~\ref{cor:asymp} no longer holds when ``$e=1$ or $w=(1,1)$'' is weakened to ``$\phi$ is representable''.  For example, take $u=(1,3)$ and $w=(1,3)$, and consider the morphism
\begin{align*}
\phi\colon\P(1,3)&\longrightarrow\P(1,3)\\
(x_0,x_1)&\longmapsto(x_0^2,x_1^2),
\end{align*}
which has $e=2$ and is therefore representable.  For all primes~$p$, taking $x=(p,p^2)\in\P(1,3)(\Q)$, we get
\begin{align*}
S_w(x) &= S_{(1,3)}(p,p^2) = p,\\
S_u(\phi(x)) &= S_{(1,3)}(p^2,p^4) = S_{(1,3)}(p,p) = p.
\end{align*}
On the other hand, for all primes~$p$, taking $x=(1,p)\in\P(1,3)(\Q)$, we get
\begin{align*}
S_w(x) &= S_{(1,3)}(1,p) = p^{1/3},\\
S_u(\phi(x)) &= S_{(1,3)}(1,p^2) = p^{2/3}.
\end{align*}
This shows that the ratio between $S_u(\phi(x))$ and any fixed power of $S_w(x)$ is unbounded as $x$ varies.
\end{remark}

\section{Points of bounded size on modular curves}

Let $Y(1)$ be the moduli stack over $\Q$ of elliptic curves.
There is an open immersion
$$
\iota\colon Y(1)\hookrightarrow\P(4,6)_\Q
$$
defined as follows: given an elliptic curve $E$ over a $\Q$-scheme~$S$, then Zariski locally on~$S$ we can choose a non-zero differential $\omega$ and define
$$
\iota(E)=(c_4(E,\omega),c_6(E,\omega)),
$$
where $c_4$ and $c_6$ are defined in the usual way.
A different choice of~$\omega$ gives the same point of $\P(4,6)_\Q$, so $\iota$ is well defined.

\begin{definition}
\label{def:size-ell}
Let $K$ be a number field.
Using the morphism $\iota$, we define the \emph{size function}
$$
S_K\colon Y(1)(K)\longrightarrow\R_{>0}
$$
as the composition
$$
\xymatrix@=12mm{
Y(1)(K) \ar[r]^(.48){\iota(K)}& \P(4,6)(K)\ar[r]^(.58){S_{(4,6),K}}& \R_{>0}.}
$$
\end{definition}

\begin{remark}
\label{remark:size-comparison}
If $E$ is given in short Weierstrass form as
$$
E\colon y^2 = x^3 + ax + b,
$$
then we have
$$
\iota(E) = (-48a,-864b)
$$
and hence
$$
S_K(E) = S_{(4,6),K}(-48a,-864b)
\asymp \max\{|a|^{1/4},|b|^{1/6}\}.
$$
This shows that if $E$ is an elliptic curve over~$\Q$, then the ratio between $S_\Q(E)^{12}$ and the height of~$E$
$$h(E)=\max\{|a|^{3},|b|^{2}\},$$
as defined in~\cite{HS}, is bounded from above and below by a constant.
\end{remark}

Now let $n$ be a positive integer, and let $G$ be a subgroup of $\GL_2(\Z/n\Z)$.
Let $K_G$ be the subfield of the cyclotomic field $\Q(\zeta_n)$ fixed by~$G$, where $G$ acts on $\Q(\zeta_n)$ by $(g,\zeta_n)\mapsto\zeta_n^{\det g}$.
Let $Y_G$ be the moduli stack of elliptic curves with $G$-level structure, viewed as an algebraic stack over~$K_G$.
There is a canonical morphism of stacks
$$
\pi_G\colon Y_G\to Y(1)_{K_G}.
$$
Let $K$ be a finite extension of $K_G$.
We define
$$
\E_{G,K} = \{\text{elliptic curves admitting a $G$-level structure over $K$}\}/{\cong}
$$
and
$$
N_{G,K}(X) = \#\{E\in \E_{G,K}\mid S_K(E)^{12}\le X\}.
$$

\begin{lemma}
\label{lemma:conditions}
Let $n$ be a positive integer, let $G$ be a subgroup of $\GL_2(\Z/n\Z)$, and let $w$ be a pair of positive integers.  The following are equivalent:
\begin{enumerate}
  \item There is a commutative diagram
$$
\xymatrix{
Y_G \ar[r]^(.42){\iota_G} \ar[d]_{\pi_G} & \P(w)_{K_G} \ar[d]^\phi \\
Y(1)_{K_G} \ar[r]^(.46)\iota & \P(4,6)_{K_G}}
$$
of algebraic stacks over~$K_G$,
where $\iota_G$ is an open immersion and $\phi$ is representable.
  \item The integral closure of $X(1)=\P(4,6)$ in the function field of~$Y_G$ is isomorphic to $\P(w)$.
  \item The moduli space of generalized elliptic curves with $G$-level structure is isomorphic to $\P(w)$.
\end{enumerate}
\end{lemma}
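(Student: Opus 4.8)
The plan is to route all three conditions through a single assertion about the compactified modular stack, namely that the moduli stack $X_G$ of generalized elliptic curves with $G$-level structure is isomorphic to $\P(w)_{K_G}$. I would first fix the standing structural facts, taken from the theory of modular curves (see e.g. \cite{Conrad}): $X_G$ is a proper smooth, hence normal, one-dimensional stack over~$K_G$; the forgetful morphism $\pi_G\colon Y_G\to Y(1)_{K_G}$ extends to a finite morphism $\bar\pi_G\colon X_G\to X(1)=\P(4,6)_{K_G}$; and $Y_G\subseteq X_G$ is a dense open substack with $K(Y_G)=K(X_G)$, the complement being the cusps. Because the integral closure of $X(1)$ in $K(Y_G)$ is characterized by being normal, finite over $X(1)$, and having function field $K(Y_G)$, these facts show that this integral closure \emph{is} $X_G$. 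In particular conditions (2) and (3) are two phrasings of the same isomorphism, and I would prove the cycle (1)$\Rightarrow$(2)$\Rightarrow$(3)$\Rightarrow$(1).

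For (1)$\Rightarrow$(2): given the commutative square with $\iota_G$ an open immersion and $\phi$ representable, the relation $\phi\circ\iota_G=\iota\circ\pi_G$ shows $\phi$ has one-dimensional image (since $\pi_G$ is dominant onto the $j$-line), so $\phi$ is non-constant, and by Lemma~\ref{lemma:finite} it is finite. Taking $V=\iota_G(Y_G)$, a dense open substack of $\P(w)_{K_G}$, Corollary~\ref{cor:normalization} identifies $\P(w)_{K_G}$ with the integral closure of $\P(4,6)_{K_G}=X(1)$ in~$V$. Since $\phi|_V$ coincides with $\iota\circ\pi_G$, the induced extension of function fields is the modular one $K(X(1))\hookrightarrow K(Y_G)$, so this integral closure is exactly the integral closure of $X(1)$ in the function field of $Y_G$; this is (2).

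For (2)$\Rightarrow$(3): by the normalization identification above, the integral closure in (2) is $X_G$, so $X_G\cong\P(w)_{K_G}$. For (3)$\Rightarrow$(1): assuming $X_G\cong\P(w)_{K_G}$, I would set $\phi=\bar\pi_G$ under this isomorphism and let $\iota_G\colon Y_G\hookrightarrow X_G\cong\P(w)_{K_G}$ be the open immersion of the locus of honest elliptic curves; the square commutes because $\bar\pi_G$ extends $\pi_G$ and $\iota$ is the tautological open immersion $Y(1)\hookrightarrow X(1)$. The only point to check is that $\phi$ is representable, and here I would use that the forgetful functor $X_G\to X(1)$ is faithful: an automorphism of a generalized elliptic curve together with its $G$-level structure is in particular an automorphism of the underlying curve, and this assignment $\mathrm{Aut}(E,\text{level})\hookrightarrow\mathrm{Aut}(E)$ is injective. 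By the criterion that representability is equivalent to faithfulness, recalled in Remark~\ref{remark:representable}, $\phi=\bar\pi_G$ is representable, which completes~(1).

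The hard part is not the chain of implications but the structural input on which (2)$\Leftrightarrow$(3) rests: the identification of the abstractly defined stack $X_G$ with the normalization of $X(1)$, which uses the behaviour of the compactification at the cusps and the finiteness of $\bar\pi_G$ from the structure theory of modular stacks. A secondary, milder point is to confirm that the finiteness and normalization statements for weighted projective lines (Lemma~\ref{lemma:finite} and Corollary~\ref{cor:normalization}) apply verbatim with the target $\P(u)=\P(4,6)$. By contrast, the representability of $\phi$ is immediate from faithfulness of forgetting the level structure and, pleasantly, needs no explicit computation with reduced degrees.
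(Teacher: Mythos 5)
Your proposal is correct and follows essentially the same route as the paper: the equivalence of (2) and (3) via the Deligne--Rapoport identification of the compactified moduli stack with the normalization of $X(1)$, the implication (1)$\Rightarrow$(2) via Corollary~\ref{cor:normalization} applied to $V=\iota_G(Y_G)$, and the converse by exhibiting the diagram from the modular interpretation. Your added check that $\phi$ is representable because the forgetful functor is faithful is a correct filling-in of a detail the paper leaves implicit.
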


\begin{proof}
The equivalence of (ii) and~(iii) follows from the fact that the integral closure from~(ii) is canonically isomorphic to the moduli space of generalized elliptic curves with $G$-level structure \cite[IV, Th\'eor\`eme 6.7(ii)]{DR}.

The implication (ii)$\;\Longrightarrow\;$(i) follows from the fact that the integral closure of $X(1)$ in the function field of~$Y_G$ fits in a commutative diagram as above.

The implication (i)$\;\Longrightarrow\;$(ii) follows from Corollary~\ref{cor:normalization} applied to $V=\iota_G(Y_G)$.
\end{proof}

\begin{remark}
\label{remark:counterexample}
If $G$ is a group satisfying the equivalent conditions of Lemma~\ref{lemma:conditions}, then the coarse moduli space of $X_G$ is isomorphic to~$\P^1$.  The converse does not hold.  For example, taking $G$ to be the group of upper-triangular matrices in $\GL_2(\Z/3\Z)$ gives the modular curve $X_G=X_0(3)$.  The coarse moduli space of $X_0(3)$ is isomorphic to~$\P^1$, but $X_0(3)$ itself (being the quotient of $X_1(3)$ by the trivial action of $\{\pm1\}$) is isomorphic to $\P(2) \times \P(1,3)$, which is not a weighted projective line.  One way to see this is to note that the Picard group of $\P(2)\times\P(1,3)$ contains a subgroup of order~2 (coming from the factor $\P(2)$), whereas the Picard group of a weighted projective line is infinite cyclic \cite[Example 7.27]{FMN}.
\end{remark}

\begin{remark}
The equivalent conditions of Lemma~\ref{lemma:conditions} hold if the graded $K_G$-algebra of modular forms for~$G$ is generated by two homogeneous elements.  Over~$\C$, the groups for which this happens were classified by Bannai, Koike, Munemasa and Sekiguchi \cite{BKMS}.
\end{remark}

\begin{tm}
\label{thm:final}
Let $n$ be a positive integer, and let $G$ be a subgroup of\/ $\GL_2(\Z/n\Z)$.
Let $K_G$ be the fixed field of the action of~$G$ on $\Q(\zeta_n)$ given by $(g,\zeta_n)\mapsto\zeta_n^{\det g}$.
Assume that $G$ satisfies the equivalent conditions of Lemma~\ref{lemma:conditions} for some $(w_0,w_1)$, and let $e(G)$ be the reduced degree of the canonical morphism $X_G\to X(1)$ (see Definition~\ref{def:reduced-degree}).
Furthermore, assume $e(G)=1$ or $w=(1,1)$ holds.
Then for every finite extension $K$ of~$K_G$, we have
$$
N_{G,K}(X) \asymp X^{1/d(G)}
\quad\text{as }X\to\infty,
$$
where
$$
d(G) = \frac{12e(G)}{w_0+w_1}.
$$
\end{tm}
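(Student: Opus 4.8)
The plan is to combine the point count on weighted projective lines from \Cref{thm:count} with the comparison of size functions under morphisms from \Cref{cor:asymp}, transporting everything through the commutative diagram supplied by \Cref{lemma:conditions}. Since $G$ satisfies the conditions of that lemma, I would first fix an isomorphism $X_G\cong\P(w)_{K_G}$ together with the open immersion $\iota_G\colon Y_G\hookrightarrow\P(w)_{K_G}$ and the representable morphism $\phi\colon\P(w)_{K_G}\to\P(4,6)_{K_G}=X(1)$ making the square commute; by construction $\phi$ is non-constant (as $\pi_G$ is dominant) and $e(G)=\degred\phi$. Writing $u=(4,6)$, the definition of $S_K$ (\Cref{def:size-ell}) together with the commutativity $\iota\circ\pi_G=\phi\circ\iota_G$ gives, for every $K$-point of $Y_G$ lying over an elliptic curve $E$ and with image $q'=\iota_G(q)\in\P(w)(K)$, the identity $S_K(E)=S_{u,K}(\phi(q'))$. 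This is the dictionary that reduces counting elliptic curves to counting points of $\P(w)(K)$.

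Next I would organize the two natural maps on isomorphism classes. Base-changing to $K$, the open immersion $\iota_G$ is a monomorphism, so it induces an injection $|Y_G(K)|\hookrightarrow\P(w)(K)$ whose image lies in the dense open $\iota_G(Y_G)$; its complement is a zero-dimensional closed substack and therefore contributes only finitely many $K$-points. The forgetful morphism $\pi_G$ induces a surjection $|Y_G(K)|\twoheadrightarrow\E_{G,K}$ onto the curves admitting a $G$-level structure, and this surjection is size-compatible because $S_K(E)$ depends only on $\iota(\pi_G(-))=\phi(\iota_G(-))$. The fibres of this surjection over a fixed $E$ are its $G$-level structures up to isomorphism, a number bounded by a constant $M$ depending only on $G$ (at most $\#\GL_2(\Z/n\Z)$), uniformly in $E$ away from the finitely many curves with extra automorphisms.

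With this in place, \Cref{cor:asymp} applies since we assume $e(G)=1$ or $w=(1,1)$, giving $S_{u,K}(\phi(q'))\asymp S_{w,K}(q')^{e(G)}$ with implied constants independent of $q'$. Hence $S_K(E)^{12}\le X$ translates, up to multiplicative constants on the threshold, into $S_{w,K}(q')\le c\,X^{1/(12e(G))}$. For the upper bound I would invoke surjectivity: every $E$ counted by $N_{G,K}(X)$ lifts to some $q\in|Y_G(K)|$ within the transported size bound, so
$$
N_{G,K}(X)\le\#\{q'\in\P(w)(K)\mid S_{w,K}(q')\le c_1X^{1/(12e(G))}\}\ll X^{(w_0+w_1)/(12e(G))}=X^{1/d(G)}
$$
by \Cref{thm:count}. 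For the lower bound I would divide by the uniform fibre bound $M$ and discard the finitely many points outside $\iota_G(Y_G)$ together with the finitely many curves carrying extra automorphisms, choosing $c_2$ small enough that the transported points genuinely satisfy $S_K(E)^{12}\le X$, obtaining
$$
N_{G,K}(X)\ge\frac1M\Bigl(\#\{q'\in\P(w)(K)\mid S_{w,K}(q')\le c_2X^{1/(12e(G))}\}-O(1)\Bigr)\gg X^{1/d(G)},
$$
again by \Cref{thm:count}. Together these give $N_{G,K}(X)\asymp X^{1/d(G)}$.

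The steps using \Cref{thm:count} and \Cref{cor:asymp} are essentially formal once the dictionary is fixed, so the main obstacle I anticipate is the moduli-theoretic bookkeeping: verifying that the forgetful map $|Y_G(K)|\to\E_{G,K}$ is surjective with fibres of uniformly bounded size, and that the finitely many ``bad'' points---the boundary $\P(w)\setminus\iota_G(Y_G)$ and the curves with $j\in\{0,1728\}$---are negligible and do not affect the leading term. Once these finiteness and uniform-multiplicity statements are secured, the exponent $1/d(G)$ emerges from $T^{w_0+w_1}$ with $T\asymp X^{1/(12e(G))}$, since $(w_0+w_1)/(12e(G))=1/d(G)$.
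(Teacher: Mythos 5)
Your proposal follows essentially the same route as the paper: fix the commutative square from Lemma~\ref{lemma:conditions}, use Corollary~\ref{cor:asymp} to replace $S_{(4,6)}(\phi(z))$ by $S_w(z)^{e(G)}$ up to bounded constants, and then count via Theorem~\ref{thm:count}; the exponent bookkeeping is identical. The only difference is that you spell out the moduli-theoretic reductions (surjectivity of $|Y_G(K)|\to\E_{G,K}$ with uniformly bounded fibres, finiteness of the cuspidal complement) that the paper compresses into the phrase ``for counting purposes we may ignore the cusps,'' which is a matter of exposition rather than of substance.
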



\begin{proof}
Using the commutative diagram of Lemma~\ref{lemma:conditions} and noting that for counting purposes we may ignore the cusps (cf.\ \cite[Remark~6.2]{Deng}), we obtain
$$
N_{G,K}(X) \asymp \#\{z\in\P(w)(K)\mid S_{(4,6)}(\phi(z))^{12}\le X\}.
$$
By Corollary~\ref{cor:asymp} with $u=(4,6)$, the quotient $S_{(4,6)}(\phi(z))/S_w(z)^e$ is bounded.  This implies
$$
N_{G,K}(X) \asymp \#\{z\in\P(w)(K)\mid S_w(z)\le X^{1/(12e(G))}\}.
$$
Applying Theorem~\ref{thm:count}, we obtain
$$
N_{G,K}(X) \asymp X^{(w_0+w_1)/(12e(G))}.
$$
This proves the claim.
\end{proof}

\section{Examples}

The groups corresponding to the 15 torsion groups from Mazur's theorem satisfy the conditions of Lemma~\ref{lemma:conditions}.
In \Cref{table:examples}, we list these groups and a few more satisfying these conditions.

\begin{table}[t]
\begin{tabular}{|cccccc|}
  \hline
  $G$ & $\Gamma$ & $[\SL_2(\Z):\Gamma]$ & $(w_0,w_1)$ & $e(G)$ & $d(G)$ \\
  \hline
  $G_1(1)$ & $\Gamma(1)=\SL_2(\Z)$ & 1 & $(4,6)$ & 1 & 6/5 \\  
  $G_1(2)$ & $\Gamma_1(2)=\Gamma_0(2)$ & 3 & $(2,4)$ & 1 & 2 \\  
  $G_1(3)$ & $\Gamma_1(3)$ & 8 & $(1,3)$ & 1 & 3 \\  
  $G_1(4)$ & $\Gamma_1(4)$ & 12 & $(1,2)$ & 1 & 4 \\  
  $G_1(5)$ & $\Gamma_1(5)$ & 24 & $(1,1)$ & 1 & 6 \\  
  $G_1(6)$ & $\Gamma_1(6)$ & 24 & $(1,1)$ & 1 & 6 \\  
  $G_1(7)$ & $\Gamma_1(7)$ & 48 & $(1,1)$ & 2 & 12 \\
  $G_1(8)$ & $\Gamma_1(8)$ & 48 & $(1,1)$ & 2 & 12 \\
  $G_1(9)$ & $\Gamma_1(9)$ & 72 & $(1,1)$ & 3 & 18 \\
  $G_1(10)$ & $\Gamma_1(10)$ & 72 & $(1,1)$ & 3 & 18 \\
  $G_1(12)$ & $\Gamma_1(12)$ & 96 & $(1,1)$ & 4 & 24 \\
  $G(2,2)$ & $\Gamma(2)$ & 6 & $(2,2)$ & 1 & 3 \\  
  $G(2,4)$ & $\Gamma(2,4)$ & 24 & $(1,1)$ & 1 & 6 \\  
  $G(2,6)$ & $\Gamma(2,6)$ & 48 & $(1,1)$ & 2 & 12 \\
  $G(2,8)$ & $\Gamma(2,8)$ & 96 & $(1,1)$ & 4 & 24 \\
  \hline
  $G_0(4)$ & $\Gamma_0(4)$ & 6 & $(2,2)$ & 1 & 3 \\  
  $G(4,4)$ & $\Gamma(4)$ & 48 & $(1,1)$ & 2 & 12 \\
  $G_0(8)\cap G_1(4)$ & $\Gamma_0(8)\cap\Gamma_1(4)$ & 24 & $(1,1)$ & 1 & 6 \\  
  $G(3,3)$ & $\Gamma(3)$ & 24 & $(1,1)$ & 1 &6 \\  
  $G(3,6)$ & $\Gamma(3,6)$ & 72 & $(1,1)$ & 3 & 18 \\
  $G_0(9)\cap G_1(3)$ & $\Gamma_0(9)\cap\Gamma_1(3)$ & 24 & $(1,1)$ & 1 & 6 \\  
  $G(5,5)$ & $\Gamma(5)$ & 120 & $(1,1)$ & 5 & 30 \\
  \hline
\end{tabular}
\medskip
\caption{A selection of groups satisfying the conditions of Lemma~\ref{lemma:conditions}.  The first 15 groups are those appearing in Mazur's theorem.}
\label{table:examples}
\end{table}

For positive integers $m\mid n$ we write
$$
G(m,n) = \biggl\{ g\in\GL_2(\Z/n\Z)\biggm|
g = \biggl(\begin{matrix}*&*\\0&1\end{matrix}\biggr)
\text{ and }
g\equiv\biggl(\begin{matrix}*&0\\0&1\end{matrix}\biggr)\pmod m \biggr\}.
$$
We also put
$$
G_1(n) = G(1,n)
$$
and
$$
G_0(n) = \biggl\{ g\in\GL_2(\Z/n\Z)\biggm|
g = \biggl(\begin{matrix}*&*\\0&*\end{matrix}\biggl)\biggr\}.
$$
For each group $G$ we give its inverse image $\Gamma$ under the canonical group homomorphism $\SL_2(\Z)\to\GL_2(\Z/n\Z)$, the index of $\Gamma$ in $\SL_2(\Z)$, the weights of the corresponding weighted projective line, and the values $e(G)$ and~$d(G)$.
The first 12 groups can also be found in \cite[Examples 2.1 and Example 2.5]{Meier}, and the 12 groups with $e(G)=1$ can also be found in \cite[Table~1]{BKMS}.
By construction, for all groups~$G$ in the table, the determinant $G\to(\Z/n\Z)^\times$ is surjective, hence the index $[\GL_2(\Z/n\Z):G]$ equals $[\SL_2(\Z):\Gamma]$, and $K_G$ equals $\Q$.
Furthermore, we note that the numbers $e(G)$ and~$d(G)$ can be expressed as
$$
\begin{aligned}
e(G) &= \frac{w_0w_1}{24}[\SL_2(\Z):\Gamma],\\
d(G) &= \frac{w_0w_1}{2(w_0+w_1)}[\SL_2(\Z):\Gamma].
\end{aligned}
$$



\section{Future work}

In work in progress of I. Manterola Ayala and the first author (see \cite{Irati}), results will be proved that make it possible to count points of a moduli stack of the form $\P(w)$ directly with respect to the pull-back of the size function from $X(1)$, rather than first relating this pull-back to the standard size function on~$\P(w)$.  This approach requires extending the work of Deng~\cite{Deng}, but is conceptually simpler than the approach we have taken here.

It would be interesting to obtain a result similar to Theorem~\ref{thm:final} for moduli stacks of elliptic curves that are of the form $\P(2) \times \P(1,1)$. An example of such a moduli stack is $X_0(6)$, so such a result would enable one to count elliptic curves with a 6-isogeny over any number field.

\section{Acknowledgements}
We are grateful to Pieter Moree for his effort in organizing our collaboration at MPIM Bonn, at which a large part of the work leading to this paper was conducted.
We would also like to thank Soumya Sankar and Brandon Boggess for sending us an early version of their manuscript \cite{Boggess-Sankar}.
Finally, we thank the referee for several useful comments, which in particular led to simpler proofs of Lemma~\ref{lemma:finite} and Lemma~\ref{lemma:morphism-final}.

\appendix

\section{Morphisms between quotient stacks}

In this appendix we assume some knowledge of stacks.  We place ourselves in the following situation.  Let $S$ be a scheme, let $G$ and~$H$ be two group schemes over~$S$, and let $m_G\colon G\times_S G\to G$ and $m_H\colon H\times_S H\to H$ be the group operations.  Let $X$ and~$Y$ be two $S$-schemes, let $a\colon G\times_S X\to X$ be a left action of $G$ on~$X$, and let $b\colon H\times_S Y\to Y$ be a left action of $H$ on~$Y$.  Let $p_2\colon G\times_S X\to X$ be the second projection, and let $p_{2,3}\colon G\times_S G\times_S X\to G\times_S X$ be the projection onto the second and third factors.

We consider the quotient stacks $[G\backslash X]$ and $[H\backslash Y]$ over (the \emph{fppf} site of) $S$, writing quotients on the left because $a$ and~$b$ are left actions.  Below we give an explicit description of the groupoid of morphisms $[G\backslash X]\to[H\backslash Y]$ of stacks over~$S$.
For this we will use the following description of morphisms from the quotient stack $[G\backslash X]$ to another stack~$\mathcal{Y}$ given by Noohi \cite[Proposition~3.19]{Noohi}; see also \citestacks{044U} for part of this statement.

\begin{lemma}
\label{lemma:morphism-general}
Let $\mathcal{Y}$ be a stack in groupoids over~$S$, and let $C([G\backslash X],\mathcal{Y})$ be the following groupoid.  The objects are the pairs $(f,h)$ where $f\colon X\to\mathcal{Y}$ is a morphism of stacks and $h$ is a descent datum for\/~$f$, i.e.\ an isomorphism $h\colon f\circ p_2\isom f\circ a$ of functors $G\times_S X\to\mathcal{Y}$ satisfying
$$
(m_G\times\id_X)^* h = ({\id_G}\times a)^* h \circ p_{2,3}^* h.
$$
The morphisms from $(f,h)$ to $(f',h')$ are the isomorphisms $c\colon f\isom f'$ of functors $X\to\mathcal{Y}$ satisfying
$$
a^*c\circ h = h'\circ p_2^*c.
$$
Then the groupoid of morphisms $[G\backslash X]\to\mathcal{Y}$ is canonically equivalent to $C([G\backslash X],\mathcal{Y})$.
\end{lemma}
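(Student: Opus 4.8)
The plan is to deduce the lemma from fppf $2$-descent applied to the canonical atlas $\pi\colon X\to[G\backslash X]$, using that $\mathcal{Y}$ is a stack. First I would record the groupoid presentation of the quotient stack. By construction of $[G\backslash X]$ in the fppf topology, $\pi$ is an fppf covering, and there is a canonical isomorphism
$$
X\times_{[G\backslash X]}X\isom G\times_S X,\qquad (g,x)\longmapsto(a(g,x),x),
$$
under which the two projections to~$X$ become the action~$a$ and the second projection~$p_2$. Iterating, the triple fibre product $X\times_{[G\backslash X]}X\times_{[G\backslash X]}X$ becomes canonically $G\times_S G\times_S X$, and its three face maps to $G\times_S X$ become $m_G\times\id_X$, $\id_G\times a$ and~$p_{2,3}$. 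In other words, $[G\backslash X]$ is the stack associated to the action groupoid $G\times_S X\rightrightarrows X$.

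Next I would invoke the descent property of the target. Since $\mathcal{Y}$ is a stack for the fppf topology and $\pi$ is an fppf covering, pulling back along~$\pi$ induces an equivalence between the groupoid of morphisms $[G\backslash X]\to\mathcal{Y}$ and the groupoid of descent data for~$\pi$: an object is a morphism $f\colon X\to\mathcal{Y}$ together with an isomorphism~$h$ between the two pullbacks of~$f$ to $X\times_{[G\backslash X]}X$ satisfying the usual cocycle identity on the triple fibre product, and a morphism is an isomorphism of the underlying $f$'s compatible with the~$h$'s. Concretely, a morphism $F\colon[G\backslash X]\to\mathcal{Y}$ gives $f=F\circ\pi$, with~$h$ the tautological $2$-isomorphism comparing the two restrictions of~$F$ along the two projections. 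This is the content of \citestacks{044U} and of Noohi's description \cite{Noohi}.

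Finally I would transport this description through the identifications of the first step. Under $X\times_{[G\backslash X]}X\cong G\times_S X$, the two pullbacks of~$f$ are exactly $f\circ p_2$ and $f\circ a$, so~$h$ becomes an isomorphism $f\circ p_2\isom f\circ a$ of functors $G\times_S X\to\mathcal{Y}$, as stated. Read off on $G\times_S G\times_S X$ via the three face maps computed above, the cocycle identity becomes precisely
$$
(m_G\times\id_X)^*h=(\id_G\times a)^*h\circ p_{2,3}^*h,
$$
and the compatibility condition on an isomorphism $c\colon f\isom f'$ becomes $a^*c\circ h=h'\circ p_2^*c$. This yields the asserted equivalence with $C([G\backslash X],\mathcal{Y})$.

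The conceptual input — that morphisms out of a quotient stack are governed by descent along its atlas — is standard, so I expect the main obstacle to be purely the bookkeeping of the first step: verifying the canonical isomorphisms of the double and triple fibre products with $G\times_S X$ and $G\times_S G\times_S X$, and checking that under these the simplicial structure maps match $a$, $p_2$, $m_G\times\id_X$, $\id_G\times a$ and $p_{2,3}$ on the nose. Here one must track the left-action convention and the order in which the comparison isomorphisms compose, since the wrong convention flips the cocycle identity; the displayed formula corresponds to reading~$h_{(g,x)}$ as an isomorphism $f(x)\to f(gx)$ and composing $f(x)\to f(g_1x)\to f(g_2g_1x)$.
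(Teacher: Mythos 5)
The paper offers no proof of Lemma~\ref{lemma:morphism-general} at all: it is quoted as a known result from Noohi \cite[Proposition~3.19]{Noohi} and \citestacks{044U}. Your descent argument is precisely the standard proof underlying those references, and it is correct. Your bookkeeping in the first step checks out: parametrising the triple fibre product by $(g',g,x)\mapsto(a(g'g,x),a(g,x),x)$, the three projections to the double fibre product are indeed $m_G\times\id_X$, $\id_G\times a$ and $p_{2,3}$, which turns the usual cocycle identity $\mathrm{pr}_{13}^*h=\mathrm{pr}_{12}^*h\circ \mathrm{pr}_{23}^*h$ into the displayed formula, and your reading of $h_{(g,x)}$ as $f(x)\to f(gx)$ is the convention that makes the signs come out right. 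The one place where the real content sits is your second step: the assertion that restriction along $\pi\colon X\to[G\backslash X]$ is an equivalence onto the groupoid of descent data is not purely a consequence of $\mathcal{Y}$ being a stack; it also uses that $[G\backslash X]$ is by construction the stackification of the quotient prestack, so that every object of $[G\backslash X](T)$ is fppf-locally on $T$ in the essential image of $X(T)$ (in particular, one should be a little careful with the phrase ``fppf covering'', since $\pi$ is an epimorphism of fppf stacks by construction of stackification rather than by any flatness hypothesis on $G$). That equivalence is exactly what the cited results supply, so invoking it is legitimate; in effect your write-up makes explicit the translation that the paper leaves to the reader.
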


To state the next lemma, we recall the following.  Given a left action of a group~$\Gamma$ on a set~$Z$, the \emph{quotient groupoid} $\Gamma\doublebackslash Z$ is the following groupoid: the set of objects is $Z$, the morphisms $z\to z'$ are the elements $\gamma\in\Gamma$ with $\gamma z=z'$, and composition of morphisms is the group operation in~$\Gamma$.  The set of isomorphism classes of $\Gamma\doublebackslash Z$ is just the quotient set $\Gamma\backslash Z$.

\begin{lemma}
\label{lemma:morphism-final}
In the above situation, assume in addition that all $H$-torsors on~$X$ are trivial.  Let $Z$ be the set of pairs $(f\colon X\to Y,h\colon G\times_S X\to H)$ of morphisms of $S$-schemes such that for all $S$-schemes $T$, all $x\in X(T)$ and all $g,g'\in G(T)$ we have
\begin{equation}
h(g'g,x)=h(g',gx)h(g,x)
\label{eq:cocycle}
\end{equation}
and
\begin{equation}
f(a(g,x)) = b(h(g,x),f(x)).
\label{eq:compat}
\end{equation}
Let the group $H(X)$ act on $Z$ by
$$
(c,(f,h))\mapsto(f',h'),
$$
where $f'$ and~$h'$ are defined on points as follows: for all $S$-schemes $T$, all $x\in X(T)$ and all $g\in G(T)$ we have
$$
f'(x) = b(c(x),f(x))
$$
and
$$
h'(g,x) = c(a(g,x))h(g,x)c(x)^{-1}.
$$
Then the groupoid of morphisms $[G\backslash X]\to[H\backslash Y]$ is canonically equivalent to the quotient groupoid $H(X)\doublebackslash Z$.  In particular, there is a canonical bijection between the set of isomorphism classes of such morphisms and the quotient set $H(X)\backslash Z$.
\end{lemma}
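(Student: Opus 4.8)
The plan is to apply Noohi's description (Lemma~\ref{lemma:morphism-general}) with $\mathcal{Y}=[H\backslash Y]$ and then unwind the resulting groupoid $C([G\backslash X],[H\backslash Y])$ into the explicit data $(f,h)$, using the hypothesis that all $H$-torsors on~$X$ are trivial to trivialise the torsors that appear. Concretely, I would show that $C([G\backslash X],[H\backslash Y])$ is equivalent to the quotient groupoid $H(X)\doublebackslash Z$; composing this with the canonical equivalence of Lemma~\ref{lemma:morphism-general} gives the claim, and passing to isomorphism classes yields the bijection with $H(X)\backslash Z$.

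First I would recall that an object of $[H\backslash Y](X)$ is a pair $(P,\psi)$ consisting of an $H$-torsor $P\to X$ and an $H$-equivariant morphism $\psi\colon P\to Y$, a morphism of such pairs being an isomorphism of torsors commuting with the maps to~$Y$. By the triviality hypothesis each $P$ admits a trivialisation $P\cong H\times_S X$; restricting $\psi$ to the identity section then identifies the object with a single morphism $f\colon X\to Y$ via $\psi(\eta,x)=b(\eta,f(x))$. Since the automorphisms of the trivial torsor are exactly $H(X)$ acting by right translation, two trivialisations of one object differ by some $c\in H(X)$, which replaces $f$ by $x\mapsto b(c(x),f(x))$. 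Next I would translate the descent datum $h\colon f\circ p_2\isom f\circ a$, which is an isomorphism in $[H\backslash Y](G\times_S X)$ between the two pullbacks of $(P,\psi)$; once $P$ is trivialised these pullbacks are trivial torsors on $G\times_S X$, and an isomorphism between trivial $H$-torsors over any base~$B$ is canonically an element of $H(B)$, so $h$ becomes a morphism $G\times_S X\to H$. (Note this step needs no triviality over $G\times_S X$: we merely compare two pullbacks of an already trivialised torsor.) A direct computation, orienting the isomorphism so that compatibility of $\psi$ with the maps to~$Y$ reads correctly, turns this compatibility into~\eqref{eq:compat} and the descent cocycle identity $(m_G\times\id_X)^*h=(\id_G\times a)^*h\circ p_{2,3}^*h$ into~\eqref{eq:cocycle}; thus a trivialisation sends each object of~$C$ to an element of~$Z$.

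With these translations I would define a functor $\Phi\colon H(X)\doublebackslash Z\to C([G\backslash X],[H\backslash Y])$ sending $(f,h)\in Z$ to the object with underlying trivial torsor $H\times_S X$, equivariant map $(\eta,x)\mapsto b(\eta,f(x))$, and descent datum~$h$, where~\eqref{eq:compat} guarantees that $h$ defines a morphism in $[H\backslash Y](G\times_S X)$ and~\eqref{eq:cocycle} that it is a descent datum; a morphism $c\in H(X)$ with $c\cdot(f,h)=(f',h')$ is sent to the corresponding isomorphism of trivial torsors. The two relations defining the $H(X)$-action, namely $f'(x)=b(c(x),f(x))$ and $h'(g,x)=c(a(g,x))h(g,x)c(x)^{-1}$, are precisely the conditions that $c$ respects~$\psi$ and that $a^*c\circ h=h'\circ p_2^*c$, so $\Phi$ is full and faithful: between trivialised objects the morphisms in~$C$ are exactly the elements of $H(X)$ carrying one pair to the other. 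Essential surjectivity is exactly the assertion that, after a trivialisation, every object of~$C$ is one of the pairs $(f,h)\in Z$ described above, which is where the triviality hypothesis is used. Hence $\Phi$ is an equivalence, completing the argument.

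I expect the principal difficulty to be bookkeeping rather than conceptual: one must keep all conventions mutually consistent, namely the left-versus-right $H$-action on the trivial torsor, the direction chosen for the descent isomorphism, and the order of composition, so that the cocycle and compatibility conditions and, above all, the formula $h'(g,x)=c(a(g,x))h(g,x)c(x)^{-1}$ for the $H(X)$-action emerge with exactly the stated inverses and products. A secondary point requiring care is confirming that the descent datum really becomes a single morphism $G\times_S X\to H$; as noted, this holds because we are comparing two pullbacks of an already trivialised torsor and therefore do not need to assume that $H$-torsors on $G\times_S X$ are trivial.
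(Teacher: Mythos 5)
Your proposal is correct and follows essentially the same route as the paper: both apply Lemma~\ref{lemma:morphism-general} with $\mathcal{Y}=[H\backslash Y]$, use the triviality of $H$-torsors on~$X$ to identify objects of $[H\backslash Y](X)$ with morphisms $f\colon X\to Y$ and morphisms with elements of $H(X)$, and then translate the descent datum and its cocycle condition into the element $h\in H(G\times_S X)$ satisfying \eqref{eq:compat} and \eqref{eq:cocycle}. Your explicit remark that no triviality hypothesis is needed over $G\times_S X$ (since one compares two pullbacks of an already trivialised torsor) is a correct point that the paper leaves implicit.
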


\begin{proof}
We apply Lemma~\ref{lemma:morphism-general} with $\mathcal{Y}=[H\backslash Y]$.  Because all $H$-torsors on~$X$ are trivial, the groupoid of morphisms $X\to[H\backslash Y]$ is canonically equivalent to the groupoid $D(X,[H\backslash Y])$ defined as follows: the objects of $D(X,[H\backslash Y])$ are the morphisms $f\colon X\to Y$ of schemes, and the isomorphisms $f\isom f'$ in $D(X,[H\backslash Y])$ are the elements $c\in H(X)$ such that the diagram
$$
\xymatrix{
X \ar[d]_{(c,f)} \ar[r]^{f'}& Y\\
H\times_S Y \ar[ur]_b}
$$
is commutative.  Similarly, the isomorphisms $f\circ p_2\isom f\circ a$ in the groupoid of morphisms $G\times_S X\to[H\backslash Y]$ correspond to the elements $h\in H(G\times_S X)$ such that the diagram
$$
\xymatrix{
G\times_S X \ar[r]^(.6){f\circ a} \ar[d]_{(h,f\circ p_2)} & X\\
H\times_S Y \ar[ur]_b}
$$
is commutative.  Furthermore, such an $h$ is a descent datum for~$f$ if and only if the diagram
$$
\xymatrix{
G\times_S G\times_S X \ar[r]^(.56){m_G\times\id_X} \ar[d]_{({\id_G}\times a,p_{2,3})} & G\times_S X \ar[r]^(.6)h & H\\
(G\times_S X)\times_S(G\times_S X) \ar[r]^(.66){h\times h}& H\times_S H \ar[ru]_{m_H}}
$$
is commutative.
On $T$-valued points, the commutativity of the last two diagrams comes down to \eqref{eq:compat} and~\eqref{eq:cocycle}, respectively, so the objects of $C([G\backslash X],[H\backslash Y])$ correspond to the elements of~$Z$.
The isomorphisms $(f,h)\isom(f',h')$ in $C([G\backslash X],[H\backslash Y])$ correspond to the elements $c\in H(X)$ as above such that in addition the diagram
$$
\xymatrix{
G\times_S X \ar[r]^{(c\circ a,h)} \ar[d]_{(h',c\circ p_2)} & H\times_S H \ar[d]^{m_H}\\
H\times_S H \ar[r]^(.6){m_H} & H}
$$
is commutative.  Equivalently, these isomorphisms correspond to the elements $c\in H(X)$ sending $(f,h)$ to $(f',h')$ under the given action of $H(X)$ on~$Z$.
\end{proof}


\end{document}